\newcommand{\eps}{\varepsilon}
\newcommand{\Z}{{\mathbb Z}}
\newcommand{\R}{{\mathbb R}}
\newcommand{\hL}{{\widehat L}}
\newcommand{\IP}{{\mathbb P}}
\newcommand{\IE}{{\mathbb E}}
\newtheorem{theo}{Theorem}[section]
\newtheorem{lem}[theo]{Lemma}
\newtheorem{df}[theo]{Definition}
\newtheorem{prop}[theo]{Proposition}
\newtheorem{remark}[theo]{Remark}
\title{On extinction and survival of the Bak-Sneppen model on arbitrary graphs}
\author{Serguei Popov$^{1}$ \and Stanislav Volkov$^{2}$}
\begin{document}
\maketitle

{\footnotesize 
\noindent $^{~1}$Centro de Matem\'atica, University of Porto, Porto, Portugal\\
\noindent e-mail: \texttt{serguei.popov@fc.up.pt}

\noindent $^{~2}$ Lund University, Centre for Mathematical Sciences,
Lund, Sweden\\ \noindent e-mail: \texttt{stanislav.volkov@matstat.lu.se}
}

\begin{abstract}
We study the discrete Bak-Sneppen model introduced in~\cite{BK}. We extend their results as well as the non-triviality result of~\cite{MZ} for a finite segment of $\Z^1$ with the periodic boundary condition to a large class of graphs, by using coupling between the Bak-Sneppen model and the oriented percolation in a quadrant. This allows us to avoid dealing with the so-called avalanches, thus simplifying many arguments.
\\[.3cm]\textbf{Keywords:} Bak-Sneppen model, self-organized criticality, oriented percolation.
\\[.3cm]\textbf{AMS 2010 subject classifications: 60J20, 60K35, 82C22}
\end{abstract}

\section{Introduction and main results}
A classical Bak-Sneppen model, introduced in~\cite{BAK,BAKS}, is defined as follows. There are $N\ge 3$ points uniformly spread over a circumference. Each point at discrete time $t=0,1,2,\dots$ possesses a value $\eta_i(t)\in [0,1]$, $i\in [N]=\{1,2,\dots,N\}$, called {\em fitness}; we assume that at time zero all fitnesses are distinct. At each time~$t$ we choose and index $j=j(t)$ such that
$$
\eta_j(t)=\min_{i\in[N]} \eta_i(t)
$$
and replace the fitness of site~$j$ as well as that of both its nearest neighbours on the circumference by three uniform $[0,1]$ values, drawn independently. Due to continuity of these random variables, we will a.s.\ never have ties and~$j$ is thus defined uniquely. Formally,
$$
\eta_i(t+1)=\begin{cases}
 \zeta_{i-j,t}, &\text{if } i\in \{j-1,j,j+1\};\\
 \eta_i(t), &\text{otherwise} 
\end{cases}
$$
with the contention $N+1=1$, $1-1=N$, and where $\zeta_{-1,t},\zeta_{0,t},\zeta_{-1,t}$ are i.i.d.\ uniform $[0,1]$.
A long-standing and still unproven conjecture says that as $N\to\infty$, the asymptotic distribution of $\eta_i(t)$ converges to a uniform distribution over the interval $[\eta_c,1]$ where $\eta_c\approx 0.66$.

A {\em discrete} version of the Bak-Sneppen model introduced in~\cite{BK} and later studied by~\cite{MS,MZ,VOLK} allows only two values of fitnesses, namely $\eta_i(t)\in\{0,1\}$, and the fitnesses are drawn from $Bernoulli(p)$ distribution. Of course, in this case, the minimum value may be no longer uniquely defined, and thus one draws $j(t)$ with equal probability for all those $j$ such that $\eta_j(t)=\min_i \eta_i(t)$ (which is typically $=0$, except the case when all $\eta_i$s are ones). It is easy to see that when $0<p<1$ we obtain an ergodic Markov chain with $2^N$ states. 
Let 
$$
\pi(\eta: \eta_i=1)=\pi_{N,p}(\eta: \eta_i=1)=\lim_{t\to\infty}\IP(\eta_i(t)=1)
$$ 
be the probability\footnote{By symmetry, it does not depend on $i$} that site $i$ is in state $1$ under the stationary measure. We are interested in describing $\pi_{N,p}(\eta: \eta_i=1)$ for very large $N$.

By comparison with e.g.\  branching process, it is quite easy to see that if $p\ge 2/3$ then $\pi_{N,p}(\eta: \eta_i=1)\to 1$ as $N\to\infty$. It is much harder, however, to show that the same does not happen for {\em all} $p>0$. Eventually, it was shown in~\cite{MZ} that for small enough $p$ one has $\limsup_{N\to\infty}\pi_{N,p}(\eta: \eta_i=1)<1$; the proof in~\cite{MZ} is quite complicated. The purpose of the current paper is to provide an alternative, and possibly simpler, proof, based on the comparison with oriented percolation, as well as to extend the result to more general graphs.

Indeed, it is very easy to define the discrete Bak-Sneppen model on an arbitrary finite connected graph $G$ without loops, by replacing one of the zero fitness by a Bernoulli variable together with fitnesses of all the neighbours in the graph (in the original problem, the underling graph~$G$ is just a circle graph with~$N$ vertices). We can even extend this model to {\em infinite graphs with uniformly bounded degree}. To achieve this, we switch to a continuous-time model, assuming that each vertex $x$ has its own independent Poisson clock with a rate of $1$. When such a clock rings, one of the two things happens. If the fitness of $x$ is $1$, nothing happens (except when there are no zero fitnesses anywhere on the graph). If the fitness of $x$ is $0$, the fitness of $x$ and each of its neighbours $y\sim x$ is replaced by an independent Bernoulli($p$) variable. In case of a finite graph, the embedded Markov chain coincides with the Bak-Sneppen model described above. {\em For convenience, from now on we will work only with the continuous time version of the model even in case of finite graphs.}

While one of the first conjectures regarding the distribution of fitness in the continuous model remains open, there are numerous results for related models. In~\cite{Guiol} the authors studied the model without geometric interactions. The Bak-Sneppen evolution model on a regular hypercubic lattice in high dimensions was studied in~\cite{Chhimpa}. A local barycentric version of the process was studied in~\cite{VOLK2}.
The hydrodynamic limit of the Bak-Sneppen branching diffusion was obtained in~\cite{Ilie1,Ilie2}.
For other recent and related results on the Bak-Sneppen model see~\cite{BB,BR,FRA,GKW,MS} and references therein.

Let $G$ be a finite connected graph without loops or repeated edges.  Let~$d$ be the maximum degree of a vertex. For $x\in G$, we refer to $x\cup\{y: y\sim x\}$
as the \emph{neighbourhood} of~$x$.

\begin{df}
\label{df_chain}
We say that $(x_1,\ldots,x_n)$ is a \emph{self-avoiding path}, if $x_j\sim x_{j+1}$ for all $j=1,\ldots,n-1$, and all sites in the path are distinct. Further, we say that a self-avoiding path $(x_1,\ldots,x_n)$ is a \emph{chain}, if the neighbourhoods of~$x_i$ and~$x_j$ do not intersect whenever $j-i\geq 3$.
\end{df}

Let $\ell(G)$ be the length of the longest chain in~$G$; also, let~$\ell_x(G)$ be the length of the longest chain containing~$x$. Note that, for uniformly bounded-degree graphs, it is straightforward to obtain that~$\ell_x(G)$  grows at least logarithmically in~$|G|$. Also, observe that the shortest (in the sense of the graph distance) path between two distinct sites~$x$ and~$y$ is a chain.\footnote{Indeed, if it were not a chain, it is not difficult to see that one would be able to construct a shorter path.}

Let $\pi$ denote the stationary distribution for our (continuous-time) Markov chain. We now state our first result, which says that ones become ``locally improbable'' if the parameter~$p$ is small.
\begin{theo}
\label{t_small_p_1_in_x}
For a given~$d$ there exists a function $g_d:[0,1]\to[0,1]$ with the property $g_d(p)\to 0$ as $p\downarrow 0$, such that for any finite connected graph~$G$ of maximal degree~$d$ and for any $x\in G$ we have (for the Bak-Sneppen with parameter~$p$)
\[
\pi(\eta: \eta_x=1) < g_d(p).
\]
%\SP{did we actually define $\pi$?} \SV{displayed formula on p.2} \SP{It's specific to the model on a ring graph and we don't say who is $\pi$ anyways; in any case, I think it's a good idea to say somewhere that we denote by $\pi$ the stationary measure of the corresponding continuous-time Markov chain.}
\end{theo}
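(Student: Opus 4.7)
The plan is to bound $\pi(\eta:\eta_x=1)$ uniformly by coupling the continuous-time Bak--Sneppen dynamics along a long chain through $x$ with a subcritical oriented site percolation on $\Z_+^2$. Since $g_d(p)$ must be independent of~$G$, I would choose a chain $(x_1,\ldots,x_n)$ with $x_1=x$ and prove a bound that improves with~$n$, letting $n\to\infty$ at the end; on graphs too small to host a long chain a direct estimate will suffice.

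The first ingredient is the standard graphical construction: independent rate-$1$ Poisson clocks at every vertex of~$G$, each ring carrying independent Bernoulli$(p)$ marks for every site in the refreshed neighbourhood, so that the process is a deterministic functional of this input. I would then slice space--time into rectangular blocks $B_{i,k}$ indexed by a chain position~$i$ and a time window of some length $\delta>0$. A block is declared \emph{good} if inside the window some clock ring occurs at a vertex of $\{x_i\}\cup N(x_i)$ at which the current fitness is~$0$ and whose Bernoulli mark for~$x_i$ equals~$0$. With $\delta$ tuned appropriately, the unconditional probability of goodness tends to~$1$ as $p\downarrow 0$ because refreshes output~$0$ with probability $1-p$. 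The crucial use of Definition~\ref{df_chain} is that the condition $N(x_i)\cap N(x_j)=\emptyset$ for $|j-i|\ge 3$ means goodness of $B_{i,k}$ and $B_{j,k'}$ depends on disjoint Poisson/Bernoulli inputs whenever $|j-i|\ge 3$; together with the disjointness of time windows, this yields a finite-range-dependent field of bad blocks.

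By the Liggett--Schonmann--Stacey domination theorem, the bad-block field is then stochastically dominated by i.i.d.\ Bernoulli$(q(p))$ site-percolation on $\Z_+^2$ with $q(p)\to 0$ as $p\downarrow 0$. The deterministic combinatorial step is to show that $\eta_{x_1}(t)=1$ in stationarity implies the existence of a long oriented path of bad blocks emanating from~$(1,t)$ backward in time and outward along the chain: a~$1$ persists at~$x_1$ only so long as the block structure fails to deliver a zero-refresh there, and the failure of the good event at $x_1$ forces, by inspection of how zeros can propagate into $N(x_1)$, the failure of good events at chain-neighbouring blocks. Once $q(p)$ is below the critical threshold for oriented site-percolation on $\Z_+^2$, the probability of an open cluster of size~$n$ decays exponentially in~$n$, yielding $\pi(\eta_x=1)\le g_d(p)$ with $g_d(p)\to 0$.

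The hardest part I expect is the deterministic step together with the simultaneous choice of block parameters. One must pick $\delta$, the block geometry, and the precise good event so that (a)~the probability of a good block tends to~$1$ as $p\downarrow 0$; (b)~goodness is measurable with respect to inputs inside a bounded space--time neighbourhood, giving the required finite-range dependence; and (c)~goodness of the appropriate cluster of blocks genuinely forces $\eta_{x_1}$ to be~$0$. Balancing (a)--(c) is delicate, but the chain condition is tailored precisely to provide the independence scale that makes them compatible.
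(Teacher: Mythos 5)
Your high-level strategy (graphical construction, blocks along a chain, the chain's separation property for finite-range dependence, LSS domination, comparison with oriented percolation) is indeed the paper's strategy. But there are two genuine gaps in the way you propose to close the argument.

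First, your ``good'' event is defined in terms of the current state of the process (``some clock ring occurs at a vertex \dots at which the current fitness is~$0$''). That makes the event non-local: whether the fitness at a given vertex is~$0$ at a given moment depends on the entire past of the graphical input, not just on the slab $B_{i,k}$. This destroys exactly the finite-range dependence that LSS requires, so the domination by a Bernoulli field is not available as stated. The paper avoids this by defining the event purely in terms of the Poisson/Bernoulli input: a stick is declared $A$-good if \emph{every} ring on it in the window only produces zero marks on $A$. This is input-measurable, and the dynamics enter only through Proposition~\ref{p_nice_block}: a nice block carries a zero from its bottom edge to its top, \emph{provided a zero is present at the bottom}. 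The ``provided'' clause is essential. Note also that your weaker event (just one zero-refresh) does not even guarantee that $x_i$ ends the window at~$0$, because a later ring inside the same window can plant a~$1$ at~$x_i$.

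Second, the claimed ``deterministic combinatorial step'' is not correct as stated. You assert that $\eta_{x_1}(t)=1$ forces a long oriented cluster of bad blocks because ``the failure of the good event at $x_1$ forces \dots the failure of good events at chain-neighbouring blocks.'' But goodness of blocks at different chain positions is (after the chain's separation property) built precisely to be \emph{independent}, so one block being bad cannot force its neighbours to be bad. The correct contrapositive of the paper's Proposition~\ref{p_nice_block} is: a~$1$ at the top of a block implies the block is bad \emph{or} there was no zero at its bottom — and the second alternative is what makes a naive backward contour argument collapse (you would have to rule out long stretches with no zeros, which is essentially what you are trying to prove). The paper resolves this by working \emph{forward}: it shows that zeros, compared with a supercritical oriented percolation (Proposition~\ref{p_OP_strip}), reach $x$ with probability close to one after time $t\asymp\hL(p,d)\ell(G)$, whether or not $\eta_x$ was already~$0$, and then feeds these two transition-probability bounds into the flow-balance Lemma~\ref{l_est_pi} to conclude $\pi(\eta_x=1)$ is small. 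The contour estimates do appear, but inside the proof of Proposition~\ref{p_OP_strip}, not as a direct dual of the stationary event $\{\eta_x=1\}$. Without some replacement for the balance lemma, your proposal has no mechanism to pass from a transient estimate to a bound on $\pi$.
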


Next, we state a ``concentration of measure'' result, which says that, under some additional conditions, in the small-$p$ regime, the stationary measure of the set of configurations with ``too many'' $1$s converges to zero very quickly as the graph size goes to infinity.  For a configuration~$\eta$, let  $\bar\eta=|G|^{-1}\sum_{x\in G}\eta_x$ be the proportion of $1$s in the configuration.

\begin{theo}
\label{t_small_p_proportion}
Assume that~$G$ (of maximal degree~$d$) can be covered by~$K$ chains, each of size at least~$\ell$. 
Then there exists a function $\tilde g_d:[0,1]\to[0,1]$ with the property $\tilde g_d(p)\to 0$ as $p\downarrow 0$, such that 
\[
\pi\big(\eta: \bar\eta_x \geq \tilde g_d(p)\big) 
 \leq c_1K e^{-c_2 \ell}.
\]
\end{theo}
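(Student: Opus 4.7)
The goal is to upgrade the single-site bound of Theorem~\ref{t_small_p_1_in_x} into a large-deviation statement along each individual chain, and then combine via a union bound. Two ingredients are needed: (i) the chain condition, which produces disjoint neighbourhoods along well-separated indices, and (ii) a locality feature of the oriented-percolation coupling used in the proof of Theorem~\ref{t_small_p_1_in_x}, which converts disjoint-neighbourhood events into genuinely \emph{independent} events, thus enabling a Chernoff bound.

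\textbf{Thinning and localisation.} Fix one chain $\mathcal{C}=(x_1,\dots,x_m)$ of length $m\geq\ell$ from the covering, and split its indices into the three residue classes $\mathcal{C}^{(r)}=\{x_j : j\equiv r\pmod{3}\}$ for $r\in\{0,1,2\}$. By Definition~\ref{df_chain}, any two distinct vertices in the same $\mathcal{C}^{(r)}$ sit at chain-distance at least $3$, hence their neighbourhoods $\{x\}\cup\{y:y\sim x\}$ are disjoint. Inspecting the oriented-percolation coupling that underlies Theorem~\ref{t_small_p_1_in_x}, the event $\{\eta_x=1\}$ at stationarity is dominated by a local event $A_x$ with $\IP(A_x)\leq g_d(p)$, supported on the Poisson clocks and Bernoulli marks lying inside the neighbourhood of $x$ (over a bounded space-time window whose finiteness comes from the subcriticality of the dominating percolation for small $p$). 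Thanks to the disjointness of neighbourhoods, the family $(\mathbf{1}_{A_x})_{x\in\mathcal{C}^{(r)}}$ is then a collection of \emph{independent} Bernoullis of parameter at most $g_d(p)$.

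\textbf{Chernoff and union bound.} For any fixed $\alpha>g_d(p)$, a classical Chernoff inequality for independent Bernoulli sums yields
\[
\IP\Big(\sum_{x\in\mathcal{C}^{(r)}}\1{\eta_x=1}\;\geq\;\alpha\,|\mathcal{C}^{(r)}|\Big)
\;\leq\;\exp\bigl(-c_2(p)\,\ell/3\bigr),
\]
with $c_2(p)>0$ depending on $g_d(p)$ and $\alpha$. Summing over $r\in\{0,1,2\}$ and then over the $K$ chains, combined with a pigeonhole step reducing $\{\bar\eta\geq\tilde g_d(p)\}$ to the occurrence of at least one high-density chain (taking $\tilde g_d(p):=C\alpha$ for a constant $C$ absorbing the possible overlap multiplicity of the covering, which still satisfies $\tilde g_d(p)\to 0$ as $p\downarrow 0$), produces the claimed bound of the form $c_1 K e^{-c_2\ell}$.

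\textbf{Main obstacle.} The crux is the localisation asserted in the second step: a priori $\{\eta_x=1\}$ is not a local event, since Bak--Sneppen dynamics invokes a global minimum operation and can, in principle, propagate information across arbitrarily long distances. The subcritical oriented-percolation coupling of Theorem~\ref{t_small_p_1_in_x} is precisely what allows one to package $\{\eta_x=1\}$ inside a dominating event supported on the local randomness in $N(x)$; carefully extracting this coupling \emph{simultaneously} at all vertices of a single residue class, so that the corresponding dominating events depend on disjoint portions of the common driving randomness, is the point at which the proof must be done with care.
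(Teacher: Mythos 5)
The paper proves this theorem by a very different route: it applies Lemma~\ref{l_est_pi} directly to the set $A=\{\eta:\bar\eta\geq\tilde g_d(p)\}$, using Proposition~\ref{p_OP_strip}~(ii)--(iii) to show that under the supercritical oriented-percolation coupling, zeros spread to a $(1-h)$-fraction of each chain after time of order $\hL\ell$ with probability at least $1-ce^{-c'\ell}$, and then finishes with a union bound over the $K$ chains. No independence, no Chernoff, no localisation of the single-site event.

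Your approach has a genuine gap at exactly the point you yourself flag as the ``main obstacle,'' and I do not think it can be patched. The claim that at stationarity $\{\eta_x=1\}$ is dominated by a local event $A_x$ ``supported on the Poisson clocks and Bernoulli marks lying inside the neighbourhood of $x$'' over a bounded space-time window is not what the coupling in the proof of Theorem~\ref{t_small_p_1_in_x} provides, and is most likely false. The mechanism by which $\eta_x$ gets driven to $0$ is the \emph{supercritical} (not subcritical, as you write) oriented percolation along the \emph{entire chain} containing $x$: a zero must appear somewhere, percolate across the chain to $x'$, and then be relayed to $x$ -- an event living on a space-time region of width $\Theta(\ell(G))$, not the $O(d)$-sized neighbourhood of $x$. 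Your appeal to subcriticality to bound the window is backwards: subcriticality would mean zeros do not spread, which would make $\pi(\eta_x=1)$ large, not small. Moreover, the stationary measure is the long-time limit of dynamics that include a genuinely global rule (when all sites are $1$, a uniformly random site flips), so even after successfully coupling to percolation, the stationary law of $(\eta_x)_{x\in\mathcal C^{(r)}}$ has no obvious product structure; disjoint neighbourhoods do not make the spins independent. Because the independence/Chernoff machinery is the entire engine of your argument, without this localisation the proof does not go through, whereas the paper's route via Lemma~\ref{l_est_pi} and the contour estimates in Proposition~\ref{p_OP_strip} sidesteps the issue by bounding $\pi(A)$ through the dynamics rather than through any independence of marginals.
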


While Theorem~\ref{t_small_p_proportion} applies to many graphs, e.g., finite parts of integer lattices, discrete tori, and so on, it does not apply to many other families of graphs (e.g.\ trees). It would be interesting to determine what can be done in the case of more general graphs. For now, we would like to leave it as an open problem.

\vspace{ 3mm}
Next, we present the complementary statement for the extinction of zeros, which generalizes the results obtained in~\cite{BK,VOLK}.
\begin{theo}
\label{t_extinct_zeros}
Let $d$ be the maximal degree of~$G$. There is $q_0=q_0(d)>\frac{1}{d+1}$ (see~\eqref{def_q0}) such that if $q:=1-p\leq q_0$, then the zeros ``become extinct'', in the sense that
\begin{equation}
 \pi(\eta: \text{there are more than $k$ zeros})\leq c_1e^{-c_2k} 
\end{equation}
for some positive constants $c_1,c_2$ depending on~$d$.
\end{theo}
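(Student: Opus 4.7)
My plan is to dominate the zero-process by a subcritical continuous-time branching random walk using the graphical construction, and then to convert subcriticality into an exponential tail on the stationary number of zeros by a renewal-type argument.

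\textbf{Graphical construction and forward branching.} Attach to each $x\in G$ an independent rate-$1$ Poisson clock, and to each firing time $\tau$ attach independent Bernoulli$(p)$ labels $B_y^{x,\tau}$ for $y\in N[x]$. A firing at $(x,\tau)$ overwrites $\eta$ on $N[x]$ by these labels when $\eta_x(\tau-)=0$; outside the all-ones configuration, a firing at a $1$-vertex does nothing. View each zero as a ``particle'': a particle at $y$ waits an $\mathrm{Exp}(1)$ time, then dies and spawns $\mathrm{Bin}(|N[y]|,q)$ new particles in $N[y]$. Since the real dynamics destroys all zeros in $N[y]$ upon firing and cannot hold two zeros at the same vertex, the true zero count is stochastically dominated by this branching population, whose mean offspring is at most $(d+1)q$. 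Standard Galton-Watson theory then gives $\IP(\text{total progeny of one particle} >k)\le C\rho^k$ for some $\rho<1$ whenever $q<\tfrac{1}{d+1}$.

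\textbf{Pushing the threshold above $\tfrac{1}{d+1}$.} The key refinement is to exploit the geometric proximity of siblings. All children of a single firing live in one neighborhood $N[y]$ and are mutually within graph distance $2$, so the next of them to fire overwrites all other zeros in its own neighborhood, providing an intrinsic self-competition absent from the pure branching comparison. Quantifying this via the expected number of true descendants after two (or more) firings shows that the effective per-generation mean stays strictly below $(d+1)q$; the supremum $q_0(d)$ of those $q$ for which this effective mean remains below $1$ is then strictly greater than $\tfrac{1}{d+1}$ and gives the quantity referenced by \eqref{def_q0}.

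\textbf{From progeny tails to stationary tails.} Once we have subcritical exponential-progeny tails, I would split time at the renewal moments when the chain visits the all-ones configuration. Each excursion from all-ones starts with at most $\mathrm{Bin}(d+1,q)$ zeros, and its total progeny is stochastically dominated by the subcritical Galton-Watson tree, so its maximal simultaneous zero-count has exponential tails with constants depending only on~$d$. Because the excursions form i.i.d.\ renewal cycles, time-averaging yields $\pi(\eta: \text{number of zeros} >k)\le c_1 e^{-c_2 k}$ uniformly in $|G|$. The main obstacle is the refined multi-step comparison needed to certify $q_0(d)>\tfrac{1}{d+1}$: siblings of a common firing share randomness through that firing and so are not independent, forcing one to design the coupling with an independent subcritical branching tree carefully enough to preserve the self-competition gain rather than losing it to the sibling correlations.
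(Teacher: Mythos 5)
Your branching-plus-renewal framework is a reasonable high-level alternative to what the paper does, but the proposal as written has a genuine gap precisely at the only nontrivial part of the theorem. As the paper's Remark after the statement points out, the regime $q<\tfrac{1}{d+1}$ already follows from a naive random-walk (or branching) comparison; the entire content of the theorem is that $q_0>\tfrac{1}{d+1}$. Your paragraph \textbf{Pushing the threshold above $\tfrac{1}{d+1}$} does not prove this: it asserts that ``quantifying the self-competition shows the effective per-generation mean stays strictly below $(d+1)q$'' without producing any coupling, any drift computation, or any concrete bound, and you yourself flag this as ``the main obstacle.'' Worse, the self-competition mechanism you describe is weaker than you suggest: two siblings $z_1,z_2\in N[y]$ created by the same firing need not be neighbours of one another, so a firing at $z_1$ does not in general destroy the sibling $z_2$ (it only kills zeros in $N[z_1]$). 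The competition you can exploit is different and more local, and pinning it down is exactly where the work lies.

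The paper makes this precise by abandoning the branching picture in favour of a Lyapunov (drift) argument on the Markov chain of zero configurations, together with Lemma~\ref{elemlemma}, which converts a uniformly negative drift with bounded increments into an exponential stationary tail. The key combinatorial observation is formalized via a two-type classification of zeros: a zero is \emph{type~$1$} if it has no zero neighbour, \emph{type~$2$} otherwise. When a type-$2$ zero fires it destroys at least two zeros (itself and at least one neighbour), and this guaranteed double-kill is the ``bonus'' that pushes the threshold up. The Lyapunov function $f(n_1,n_2)=n_1+(1-h)n_2$ gives type-$2$ particles strictly smaller weight, and a direct drift computation (accounting separately for new type-$1$ and type-$2$ births, type-$2\to$type-$1$ transitions of external particles, and the deaths at $v$ and its zero neighbours) shows $\Delta<0$ for all $q<q_0$ with $q_0$ given by~\eqref{def_q0}, and in particular $q_0>\tfrac{1}{d+1}$. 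This sidesteps the sibling-correlation problem entirely because one never needs to couple to an \emph{independent} branching tree; the negative drift is computed exactly on the true process. If you want to keep your renewal skeleton, you would still need an analogue of this weighted-count drift estimate (or some other concrete device) to certify subcriticality above $\tfrac{1}{d+1}$; the branching comparison alone cannot deliver it.
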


\begin{remark}
 A simple comparison of the number of zeros in the Bak-Sneppen model with the position of a random walk with negative drift on~$\Z_+$ gives the result of Theorem~\ref{t_extinct_zeros} for $q<\frac1{d+1}$; however, the statement of Theorem~\ref{t_extinct_zeros} is non-trivial.
\end{remark}

\begin{remark}    
For one- ($d=2$) and two-dimensional ($d=4$) tori Theorem~\ref{t_extinct_zeros} gives
$$
q_0=\frac{7}{3}-\frac{2 \sqrt{19}}3 
\sin \left(\frac{\arctan \left(\frac{9 \sqrt{107}}{137}\right)}3+\frac{\pi}{6}\right)\approx 0.412>\frac13
$$ 
and $q_0\approx 0.2145549758>1/5$ respectively. 
\end{remark}

\section{Proofs}
\label{s_proofs}
From now on, we will denote $q:=1-p$.

We start by recalling a general (but elementary) fact about non-reversible Markov chains, in discrete or continuous time. We denote by $P^{(t)}_{xy}$ the transition probability in time~$t$ from~$x$ to~$y$, and  let~$\pi$ denote the stationary measure. When dealing with the stationary measure of finite non-reversible Markov chains, the following global balance equation is often useful:
\begin{equation}
\label{flux_in_out}
 \sum_{\substack{x\in A \\ y\in A^\complement}}\pi_x P^{(t)}_{xy}
   = \sum_{\substack{x\in A \\ y\in A^\complement}}\pi_y P^{(t)}_{yx}
\end{equation}
(we leave the above formula without proof; informally, in the stationary regime, the flow that goes out of~$A$ should be equal to the flow into~$A$).  One interesting implication of~\eqref{flux_in_out} is the following
\begin{lem}
\label{l_est_pi}
 Assume that
\[
 \sum_{y\in A^\complement} P^{(t)}_{xy}\geq c>0   \qquad \text{for all $x\in A$,}
\]
 and
\[ \sum_{x\in A} P^{(t)}_{yx}< \eps   \qquad \text{for all $y\in A^\complement$.}\]
Then $\pi(A) < \eps/c$.
\end{lem}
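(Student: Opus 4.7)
The plan is to apply the global balance equation~\eqref{flux_in_out} directly, estimating each side using one of the two hypotheses. The key observation is that the double sum on each side factors, after pulling out $\pi_x$ (resp.\ $\pi_y$), into a single stationary weight multiplied by exactly the kind of marginal sum that appears in the hypotheses.

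First I would rewrite the left-hand side of~\eqref{flux_in_out} as $\sum_{x\in A} \pi_x \bigl(\sum_{y\in A^\complement} P^{(t)}_{xy}\bigr)$. By the first hypothesis, the inner sum is at least~$c$ for every $x\in A$, which gives the lower bound $c\,\pi(A)$ on the left-hand side. Symmetrically, I would rewrite the right-hand side as $\sum_{y\in A^\complement} \pi_y \bigl(\sum_{x\in A} P^{(t)}_{yx}\bigr)$ and use the second hypothesis to upper bound the inner sum by~$\eps$, obtaining $\eps\,\pi(A^\complement) \leq \eps$ on the right-hand side. Combining the two estimates via~\eqref{flux_in_out} yields $c\,\pi(A) < \eps$, which rearranges to the claimed bound $\pi(A) < \eps/c$.

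There is essentially no obstacle here: the lemma is a two-line consequence of~\eqref{flux_in_out}. The only minor point worth mentioning is the strictness of the conclusion — if $\pi(A^\complement)=0$, then the right-hand side of~\eqref{flux_in_out} vanishes and so $c\,\pi(A)=0$, making the inequality $\pi(A)<\eps/c$ trivial; otherwise strictness on the right-hand side is preserved because $\sum_{x\in A}P^{(t)}_{yx}<\eps$ is a strict bound summed against a measure of positive mass.
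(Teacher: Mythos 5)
Your proof is correct and takes essentially the same approach as the paper: expand the double sums in~\eqref{flux_in_out}, bound the inner sums using the two hypotheses, and combine to get $c\,\pi(A) < \eps$. The paper phrases it as a single chain of inequalities starting from $\pi(A)=\sum_{x\in A}\pi_x$ rather than estimating the two sides of~\eqref{flux_in_out} separately, but the computation is identical, and your aside on the degenerate case $\pi(A^\complement)=0$ is a fine (if unnecessary) sanity check.
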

\begin{proof}
Indeed, using~\eqref{flux_in_out}, we can write
\[
\pi(A) = \sum_{x\in A} \pi_x 
\leq c^{-1}\sum_{\substack{x\in A \\ y\in A^\complement}} \pi_x P^{(t)}_{xy}
 = c^{-1}\sum_{\substack{x\in A \\ y\in A^\complement}}\pi_y P^{(t)}_{yx} 
 < c^{-1}\eps \sum_{y\in A^\complement} \pi_y \leq c^{-1}\eps.
\]
\end{proof}
The above result formalizes the intuitively clear idea that if it is easy to leave the set~$A$, but difficult to reach it when starting from outside, then the stationary probability of being in~$A$ should be small. This approach, for example, was employed by one of the authors in~\cite{P05}.

\subsection{Graphical representation and blocks}
\label{s_2blocks}
Let us describe the process using the following graphical construction (see Figure~\ref{f_graphical_constr}, which illustrates the construction on $\mathbb{Z}$, though it applies to any graph).  It is important to note that on any (possibly infinite) graph with uniformly bounded degree, this construction guarantees the existence of the process via a standard argument. Specifically, it suffices to establish the existence of the process on a short time interval $[0, \delta]$ for some small $\delta > 0$, and then extend it iteratively. If $\delta$ is small enough, it is straightforward to show that the cluster of sites with which a given site interacts -- either directly or indirectly (as illustrated in the figure) -- is almost surely finite. Within such finite clusters, the clock rings can be completely ordered, and the graphical construction therefore defines the process uniquely.

This construction applies both to infinite graphs with infinitely many zeros in the initial configuration and to finite graphs, but in the latter case, only up to the time when all states become ones; beyond that point, the process must be restarted.

\begin{figure}
\begin{center}
\includegraphics[scale=0.9]{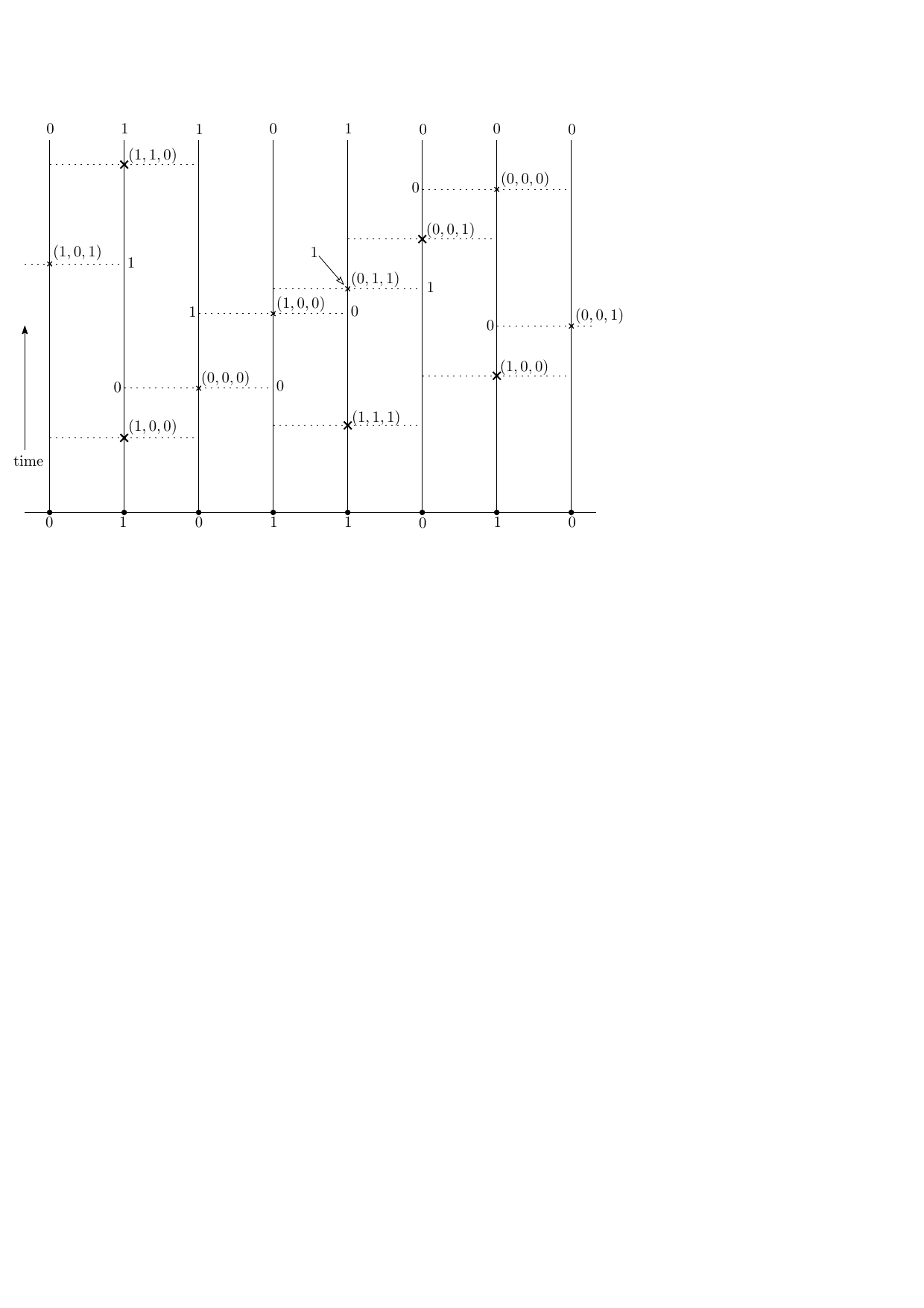}
\caption{The graphical construction: each site has a marked Poisson clock of rate one, with the marks being the proposed neighbourhood changes. When the clock rings and the current state is $1$, nothing happens; if the current state is $0$, the changes take place. The rings that are ``muted'' (because the current state is~$1$) are indicated by larger crosses on the picture.}
\label{f_graphical_constr}
\end{center}
\end{figure}

Our method of analyzing the Bak-Sneppen model in the small-$p$ regime is based on its comparison with the oriented percolation model. For that, we introduce ``blocks'', as follows. First, for an $x\in G$ and $k\in\Z_+$ we say that $\{x\}\times \big((k-1)L, kL\big]$ is an $x$-\emph{stick} (more specifically, a level-$k$ stick based at~$x$). Two same-level sticks are {\em neighbours} if their bases are neighbours in~$G$. Let~$A$ be a subset of the union of $\{x\}$ and its neighbours on $G$, thus $|A|\le d+1$. We say that an $x$-stick is
\begin{itemize}
\item  {\em $A$-good}, if all the clocks' rings on it only produced zero marks on the vertices of $A$  (in particular, if there were no rings at all, then the stick is good).
\end{itemize}
Note that all sticks are good or not independently of each other, regardless of which neighbourhoods are included.  Hence,
\begin{equation}
\label{stick_good}
\IP[\text{a stick is $A$-good}] \geq 
 \sum_{k=0}^\infty \frac{L^k e^{-L}}{k!}q^{|A|k} =e^{-L\left(1-q^{|A|}\right)}.
\end{equation}

Further, if $x\sim y$ and $k\in\Z_+$, then $\{x,y\}\times \big((k-1)L, kL\big]$ is a (level-$k$) block. We declare this block \emph{nice}, if 
\begin{itemize}
 \item the clocks in both $x$- and $y$-sticks rang at least once;
 \item both $x$- and $y$-sticks are $\{x,y\}$ good;
 \item for all $v$ such that  $v\sim x$ we have $v$-stick is $\{x\}$-good and for all $w$ such that $w\sim y$ we have $w$-stick is $\{y\}$-good (if there is~$v$ which is a neighbour of both~$x$ and~$y$, we require $v$-stick to be $\{x,y\}$-good).
\end{itemize}
Further, two same-level blocks $\{x,y\}\times \big((k-1)L, kL\big]$ and $\{x',y'\}\times \big((k-1)L, kL\big]$ are \emph{separated}, if
\[
\big\{z\in G: z\sim x \text{ or }z\sim y\big\} \cap \big\{z'\in G: z'\sim x' \text{ or }z'\sim y'\big\}
  = \emptyset
\]
(i.e., their neighbourhoods do not intersect). An important observation is that if a set of same-level blocks has the property that any two blocks from there are separated, then these blocks are nice (or not) independently.

Note that if we know that $(x_1,\ldots,x_n)$ is a chain, this implies that the same-level blocks based on $\{x_{i-1},x_i\}$ and $\{x_j,x_{j+1}\}$ are separated whenever $j-i\geq 3$. 
 
Now, a straightforward but crucial observation is:
\begin{prop}
\label{p_nice_block}
If a block is nice and has at least one zero at its bottom, then it will have both zeros at its top.
\end{prop}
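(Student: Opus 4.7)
The plan is to verify this directly from the graphical construction, using each of the three bullet points in the definition of \emph{nice} to control the evolution inside the block. Without loss of generality assume $\eta_x((k-1)L)=0$ (the case $\eta_y((k-1)L)=0$ is symmetric). I would split the argument into a short monotonicity lemma followed by a triggering step.

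The monotonicity claim is: inside the block, once $\eta_x$ takes the value $0$, it remains $0$ up to time $kL$, and similarly for $\eta_y$. Only three kinds of events can change $\eta_x$ while $\eta_x=0$: a ring on the $x$-stick, a ring on the $y$-stick with $\eta_y=0$, or a ring on a $v$-stick for some other neighbour $v\sim x$ with $\eta_v=0$. In the first case the $\{x,y\}$-goodness of the $x$-stick forces the proposed mark at $x$ to be $0$; in the second case the $\{x,y\}$-goodness of the $y$-stick does the same; in the third case, the $v$-stick is $\{x\}$-good (or $\{x,y\}$-good if $v$ happens to be a common neighbour of $x$ and $y$), which again gives mark $0$ at $x$. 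Hence $\eta_x$ cannot flip back to $1$, and the symmetric case analysis yields the statement for $\eta_y$.

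With monotonicity in hand the proposition follows quickly. By niceness, the $x$-stick rings at least once in $((k-1)L,kL]$; let $s^*$ be the time of its first ring. Monotonicity gives $\eta_x(s^*)=0$, so the ring actually triggers an update; since the $x$-stick is $\{x,y\}$-good, that update sets $\eta_y(s^*)=0$ as well. Applying the monotonicity statement to $\eta_x$ from $(k-1)L$ onward and to $\eta_y$ from $s^*$ onward gives $\eta_x(kL)=\eta_y(kL)=0$, which is exactly the conclusion.

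There is no genuine obstacle here; the whole argument is a careful enumeration of events, and the definition of \emph{nice} has been engineered precisely so that every mechanism which could flip $\eta_x$ or $\eta_y$ back to $1$ is ruled out by some clause. The only mildly delicate point is not to overlook the common-neighbour case, which is exactly what the parenthetical in the third bullet of the definition handles.
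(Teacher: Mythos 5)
Your proof is correct, and the paper in fact gives no proof of this proposition at all — it is labelled ``a straightforward but crucial observation'' and left to the reader. Your two-step argument (a monotonicity claim that a zero at $x$ or $y$ cannot flip back to one during the block, because every triggering ring — at $x$, at $y$, or at an outside neighbour $v$ — is covered by one of the goodness clauses; followed by the triggering step using the first ring on the $x$-stick together with its $\{x,y\}$-goodness to seed a zero at $y$) is exactly the verification the definition of ``nice'' was engineered to make work, including the correct treatment of common neighbours via the parenthetical clause. Nothing is missing.
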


To compute the probability of a block {\em being nice}, observe that
\begin{align*}
\IP[\text{$x$-stick is good and had at least one ring}] &=
\sum_{k=1}^\infty \frac{L^k e^{-L}}{k!}q^{2k}=e^{-L(1-q^2)} - e^{-L}
\end{align*}
and the same formula holds for $y$-stick. Since the sites of the block have at most $2(d-1)$ neighbours in~$G$,
\begin{equation}
\label{proba_block_nice}
 \IP[\text{a block is nice}]
%  \geq e^{-2L(d-1)(1-q^{d+1})}\times 
 \geq e^{-2L(d-1)p}\times 
  \big(e^{-L(1-q^2)} - e^{-L}\big)^2.
\end{equation}
Indeed, let $n_2$ ($n_1$ resp.) be the number of vertices which are adjacent to both $x$ and $y$ (to one of them, resp.) Then the probability that all those neighbouring sticks are good is
\begin{align*}
 \big(e^{-Lp}\big)^{n_1}\times  \big(e^{-L(1-q^2)}\big)^{n_2}
&=\exp\big(-Lp((n_1+2n_2)-n_2p)\big)
\\ &\ge \exp\big(-L (2n_2+n_1)p\big)
%\\ &
\ge \exp\big(-2L(d-1)p\big)
\end{align*}
since $n_1+2n_2\le 2(d-1)$ as there are at most $2(d-1)$ edges originating from $x$ and $y$ to the ``outside world''.

We find that the above expression is maximised with respect to~$L$ when~$L$ equals
\begin{equation}
\label{hatL}
\hL(p,d):= \frac{1}{q^2}
\ln\Big(1+\frac{q^2}{(q+d)p}\Big)
= \big(1+O(p)\big)\ln\frac{1}{(d+1)p}
\quad \text{ as }p\to 0,
\end{equation}
(so that a smaller $p$ corresponds to a larger $L$), then
\begin{align}
 \IP[\text{a block is nice}]
  &\geq
%\big(1 - d(d+1)p + O(p^2)\big)^2 \Big(1 - d(d+1)p\ln\frac{1+ O(p^2)}{d(d+1)p}\Big)^2 \nonumber \\ & =
1 - 2(d+1)p \ln p^{-1}+O(p) \ \to 1\quad \text{ as }p\to 0.
 \label{proba_block_good_asymp}
\end{align}

Next, to build a coupling with an oriented percolation process, we consider blocks on top of a chain. One can naturally map a chain onto a (one-dimensional) segment; so we now work with the following notation. Let us divide the space-time $\Z\times\R_+$ into blocks  $(B_{k,n}, k\in \Z, n\in \Z_+)$, where
\[
 B_{k,n} = 
 \begin{cases}
  \{2k,2k+1\}\times \big[nL, (n+1)L\big),
   & \text{ if $n$ is even},\\
  \{2k-1,2k\}\times \big[nL, (n+1)L\big),
   & \text{ if $n$ is odd}
 \end{cases}
\]
($L>0$ is a parameter to be chosen later). For a given block $B_{k,n}$, we refer to $B_{k-1,n}$ and $B_{k+1,n}$ as its \emph{neighbors}, and to 
\begin{itemize}
 \item $B_{k,n+1}$ and $B_{k+1,n+1}$, if $n$ is even 
 \item $B_{k-1,n+1}$ and $B_{k,n+1}$, if $n$ is odd
\end{itemize}
as its \emph{descendants}. See Figure~\ref{f_blocks}.
\begin{figure}
\begin{center}
\includegraphics[scale=0.9]{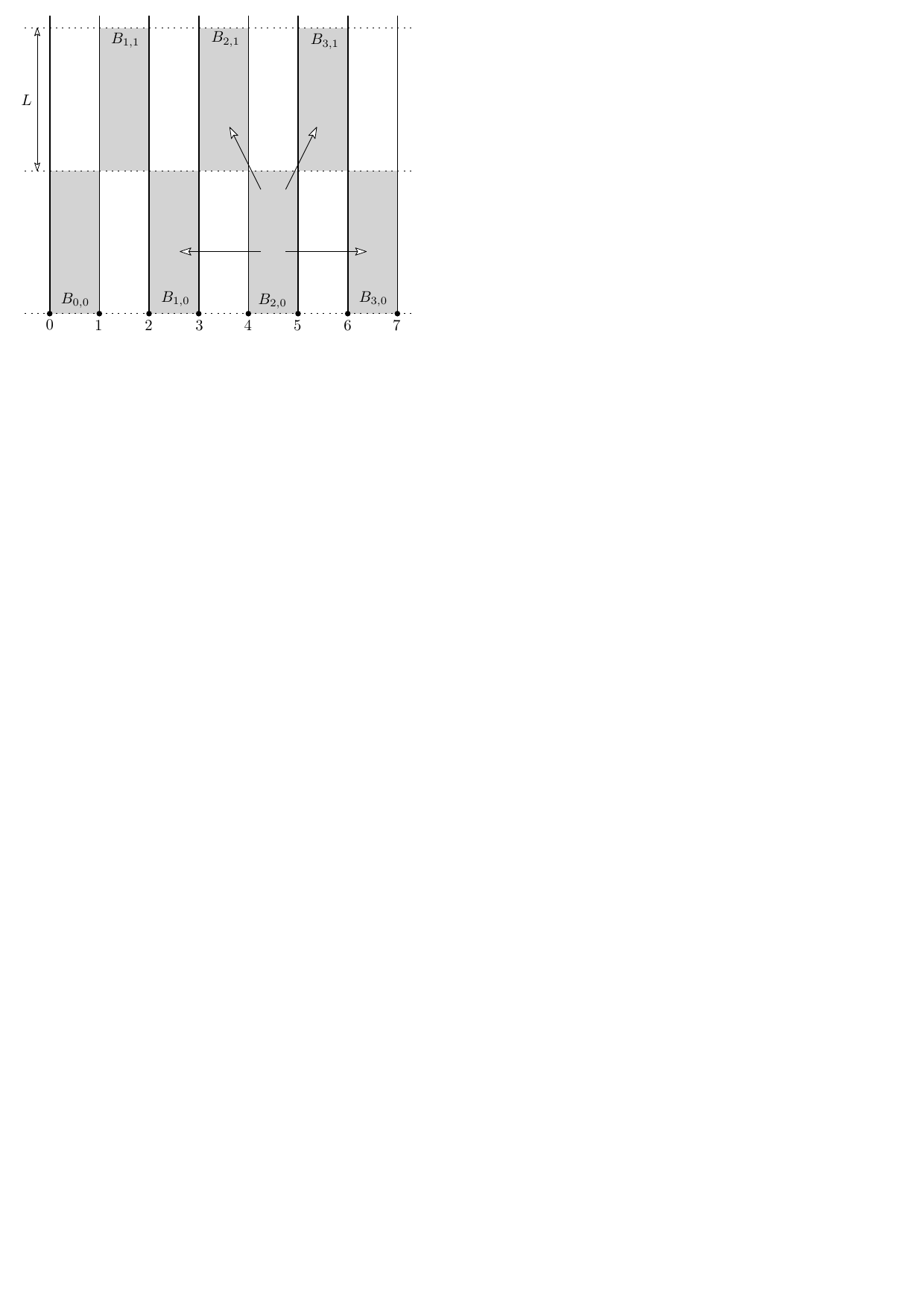}
\caption{For the block $B_{2,0}$, blocks $B_{1,0}$ and $B_{3,0}$ are neighbors (shown by the horizontal arrows), and $B_{2,1}$ and $B_{3,1}$ are descendants (shown by the diagonal arrows).}
\label{f_blocks}
\end{center}
\end{figure}

Now, we observe that each horizontal line of blocks is a one-dependent field (i.e.,  the status of a block does not depend on the statuses of blocks which are not its neighbours). Therefore, we can use the stochastic domination technique of~\cite{LSS97} (see also Theorem~2.1 in~\cite{MPV01}). Indeed, Theorem~1.3 of~\cite{LSS97} implies that,  due to~\eqref{proba_block_good_asymp}, the field of nice blocks dominates an independent Bernoulli field with success probability of at least $1-3\sqrt{2(d+1)p \ln p^{-1}}$ (the factor~$3$ is here for the sake of cleanness; anything strictly larger than $2\sqrt{2}$ would do). As a result, we have oriented percolation there, dominated by the original model, while the percolation model can be made as supercritical as we want (and so it is ``well-behaved'', so there should be enough zeros everywhere).  

Consequently, to work with connected finite graphs, we need to understand the oriented percolation process restricted to the segment $\{1,\ldots,N\}$, with additional regenerations when it ``dies out''. We will discuss this further in Section~\ref{s_OP_strip}, but first let us present an alternative construction.

\subsection{An alternative construction that does not use stochastic domination}
\label{s_4blocks}
In this section, we present an alternative definition of blocks, referred to as ``4-blocks", which also allows us to show that the Bak–Sneppen process dominates oriented percolation in the small-$p$ regime. The key difference is that the states of the blocks are constructed to be independent, eliminating the need for stochastic domination arguments. This comes at the cost of a smaller probability that a block is ``nice". We believe this construction is nevertheless of interest and worth including, even though -- to keep the notation and arguments simpler -- we do not use it in the remainder of the paper.

\begin{figure}
\centering
\includegraphics[width=0.7\linewidth]{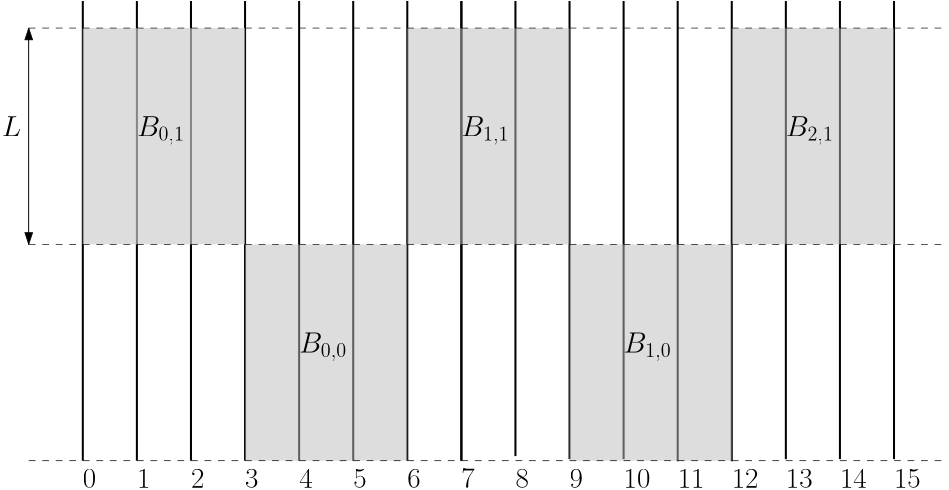}
    \caption{Block constructions; the blocks are shaded, $B=[4k,4k+3]\times [jL,(j+1)L]$ for $k,j$ such that $k+j$ is even.}
    \label{fig:enter-label}
\end{figure}

Assume that $x_1,\ldots,x_m$ is a chain and $m\geq 4$.
A \emph{4-block}, started at time $t$, is the union of four neighbouring sticks on that chain, i.e., $\{x_{k_0},x_{k_0+1},x_{k_0+2},x_{k_0+3}\}\times [t,t+L]$, where $1\leq k_0\leq m-3$. A $4$-block is called \emph{nice} if, within time $[t,t+L]$,
\begin{itemize}
\item for each $j=0,1,2,3$, the clock on $\{x_{k_0+j}\}\times [t,t+L]$ ringed at least once; 
\item $x_{k_0}$-stick is $\{x_{k_0},x_{k_0+1}\}$-good;
\item  $x_{k_0+1}$-stick is $\{x_{k_0},x_{k_0+1},x_{k_0+2}\}$-good;
\item  $x_{k_0+2}$-stick is $\{x_{k_0+1},x_{k_0+2},x_{k_0+3}\}$-good;
\item  $x_{k_0+3}$-stick is $\{x_{k_0+2},x_{k_0+3}\}$-good;
\item  for each site $w$ adjacent to {\em some} of $x_{k_0},x_{k_0+1},x_{k_0+2},x_{k_0+3}$, the $w$-stick is $A_w$-good, where $A_w$ is the subset of those of $x_{k_0},\dots,x_{k_0+3}$ which are adjacent to $w$;
\item the \emph{propagation event} occurs: in the interval~$[t,t+L]$, there is at least one time-wise increasing sequence of rings at $x_{k_0},x_{k_0+1},x_{k_0+2}$ (in this order),  and at least one time-wise increasing sequence of rings at $x_{k_0+3},x_{k_0+2},x_{k_0+1}$ (see Figure~\ref{f_propagation}).
\end{itemize}

\begin{figure}
\begin{center}
\includegraphics{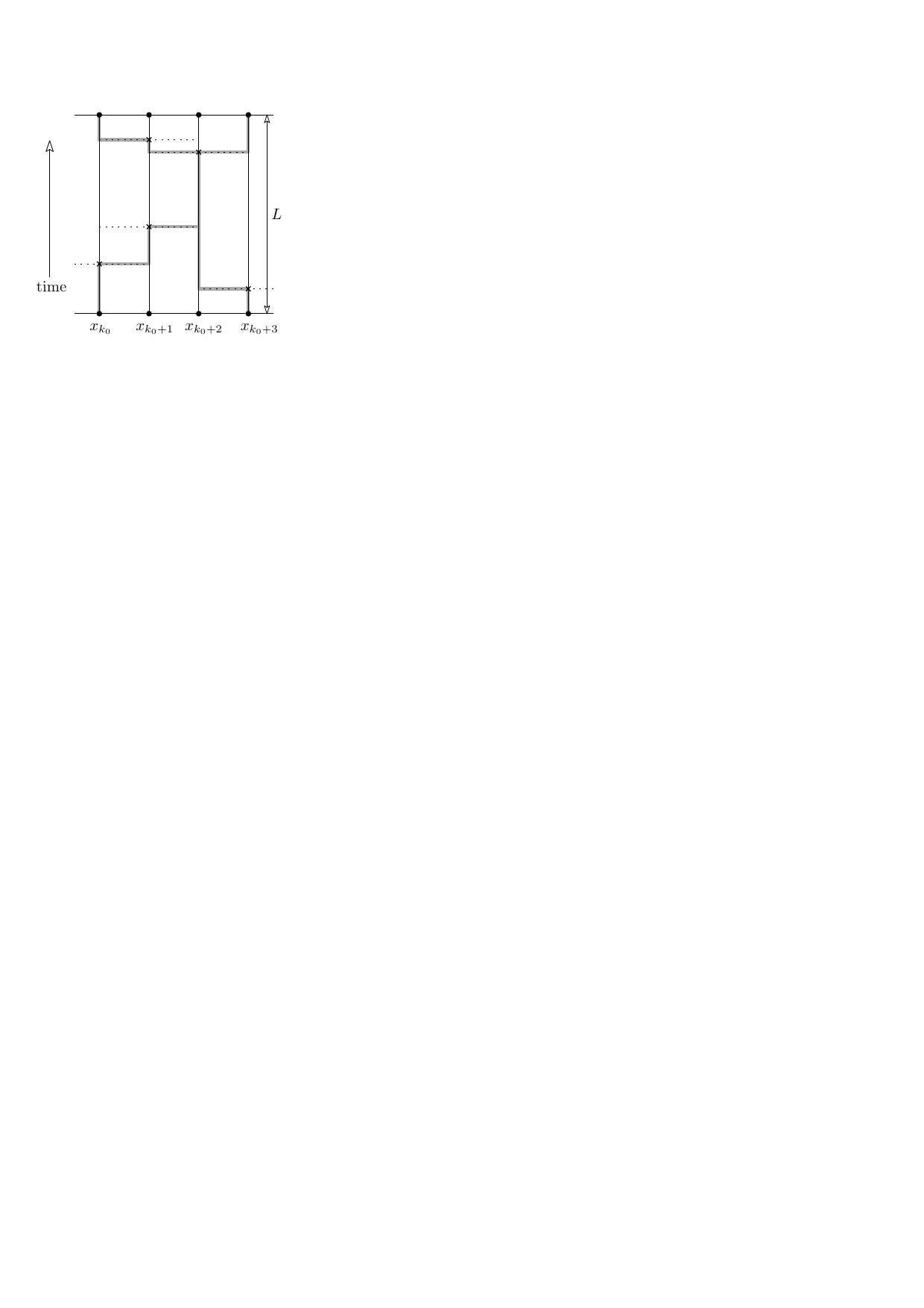}
\caption{On the definition of the propagation event.}
\label{f_propagation}
\end{center}
\end{figure}
Now, it is straightforward to observe that
\begin{itemize}
\item blocks are nice (or not) independently of each other;
\item if at the beginning of the time of the block, one of its ``extreme'' two sites (i.e., $x_{k_0},x_{k_0+3}$) had zero and the block is nice, then at the end \emph{both} $x_{k_0}$ and $x_{k_0+3}$ will have zero at the end time.
\end{itemize}
This permits us to make a direct comparison with the oriented percolation, as shown in Figure~\ref{fig:enter-label}.

Let us estimate the probability that the block is nice. A relatively easy lower bound is obtained by noting that if $x_{k_0}$ and $x_{k_0+3}$ each rings at least once in time $[0,L/3]$, while $x_{k_0+1}$ and $x_{k_0+2}$ each rings both in $[L/3,2L/3]$ and in $[2L/3,L]$ (we set w.l.o.g.\ $t=0$), then the block must be nice. This gives
\begin{equation}
\label{proba_4-block_nice}
\begin{split}  
\lefteqn{\IP[\text{a 4-block is nice}]}
\\
&\ge e^{-(4d-2)L}\cdot e^{(4d-6)Lq}
\cdot\left[e^{Lq^2}-e^{2Lq^2/3}\right]^2\cdot\left[e^{Lq^3}-2e^{2Lq^3/3}+e^{Lq^3/3}\right]^2\\
&= e^{\left(\frac{2q^3}3+\frac{4q^2}3+(4d-6)q-(4d-2)\right)L}
\cdot\left(e^{\frac{Lq^2}3}-1\right)^2\cdot\left(e^{\frac{Lq^3}3}-1\right)^4
=:\Theta(L,p).
\end{split}
\end{equation}
Indeed, there are $4$ vertices in a block; let $n_j$, $j=1,2,3,4$, denote the number of vertices in the neighbourhood of the block adjacent to exactly $j$ vertices of the block, then 
$$
n_1+2n_2+3n_3+4 n_4\le (d-1)+(d-2)+(d-2)+(d-1)=4d-6
$$ 
as this is the maximum possible number of edges originating from the block. Hence, the probability that all the sticks in the neighbourhood of a block are good is 
\[
\prod_{j=1}^4 \left[\sum_{k=0}^\infty \frac{L^ke^{-L}}{k!}\cdot q^{jk}\right]^{n_j}=\exp\left(-L\left[n_1(1-q)+n_2(1-q^2)+n_3(1-q^3)+n_4(1-q^4)\right]\right)
\]
by~\eqref{stick_good}.
At the same time,
\begin{align*}
n_1(1-q)+n_2(1-q^2)+n_3(1-q^3)+n_4(1-q^4)
&=(1-q)(n_1 + 2n_2 + 3n_3 + 4n_4)
\\  -(1-q)^2(n_2+n_3(2+q)+n_4(q^2+3))
& \le (1-q)(4d-6)   
\end{align*}
The probability that $x_{k_0}$  rang at least once in $[0,L/3]$ and it was $\{x_{k_0},x_{k_0+1}\}$-good is 
\begin{align*}
\sum_{k=1}^\infty \frac{L^k e^{-L}}{k!}\cdot q^{2k}\left[1-\left(\frac23\right)^k\right]
=e^{-L}\left[e^{Lq^2}-e^{2Lq^2/3}\right]
\end{align*}
(with the same expression for $x_{k_0+3}$) as conditionally on having $k$ rings, their times are i.i.d.\ uniformly distributed in $[0,L]$. By similar arguments, the probability that $x_{k_0+1}$  rang at least once in each $[L/3,2L/3]$ and  $[2L/3,L]$, and it was $\{x_{k_0},x_{k_0+1},x_{x_0+2}\}$-good
\begin{align*}
\sum_{k=2}^\infty \frac{L^k e^{-L}}{k!}\cdot q^{3k}\left[1-2\left(\frac23\right)^k+\left(\frac13\right)^k\right]
=e^{-L}\left[e^{Lq^3}-2e^{2Lq^3/3}+e^{Lq^3/3}
\right]
\end{align*}
(and the same expression for $x_{k_0+2}$).

To maximize the expression for $\Theta(L,p)$ in the regime $p\to 0$, we take
\[
\tilde L=\tilde L(p,d)=3\ln {\frac1{2pd}},
\]
which gives
\begin{equation}
\label{proba_4-block_good_asymp}
\Theta(\tilde L,p)=1-12(d+1)p\ln p^{-1} +O(p) \quad \text{ as }p\to 0.
\end{equation}

\subsection{Oriented percolation on a strip}
\label{s_OP_strip}
In this section, we study properties of the oriented (bond) percolation process restricted to a strip; i.e., the process lives on the sites with the even sum of the coordinates, and all sites with horizontal coordinate outside the finite segment $[0,2N]$ are deleted. The oriented edges are in the upper diagonal directions, i.e., from $(m,n)$ to $(m-1,n+1)$ and $(m+1,n+1)$. We are interested in the regime where the probability~$\theta$ that a bond is open is close to~$1$.

Note that~\cite{D84} deals only with site percolation, but due to the natural limits between site and bond percolation, this is not a problem (see, e.g., \cite[Theorem~1.33]{GR} and its proof, in particular, equation~(1.35)).

Let $\{x\to y\}$ be the event that~$x$ is connected to~$y$ by an oriented path. Let $H_n$ be the set of sites (on the strip) with the vertical coordinate equal to~$n$, then $|H_n|=N$ or $=N+1$ depending on the parity of $n$. For $B\subset H_0$, we denote  by~$\xi_n^B\subset H_n$ the set of sites on level~$n$ connected to at least one site of~$B$ by an open (oriented) path.
For $h\in (0,1]$,  we say that $A\in H_n$ is $h$-good if $|A|/|H_n|\geq h$.

\begin{prop}
\label{p_OP_strip}
Let $K_0>3$ be a fixed constant. Assume that $\theta>\frac{8}{9}$, and let $K\in[3,K_0]$.
Then there exist constants $C=C(K_0)>0$ and $C'=C'(K_0)>0$ such that the following hold.
\begin{itemize}
\item[(i)] For any $x\in H_0$ and $y\in H_{KN}$ we have $\IP[x\to y]\geq 1-C (1-\theta)$.
\item[(ii)] Let $h\in(0,1)$ be such that $\tfrac{h}{2}\ln\tfrac{1}{1-\theta}
> (1-h)\ln 3$. Then for any $x\in H_0$
\[ 
\IP[\xi_{KN}^x \text{ is $(1-h)$-good}]
\geq 1 - C\exp\big(-\tfrac{1}{2}
\ln\tfrac{1}{9(1-\theta)}\big)
-C'N^2 \exp\big((1-h)N\ln 3-\tfrac{hN}{2}
\ln\tfrac{1}{1-\theta}\big).
\]
\item[(iii)] 
%Assume in addition that the quantity~$h$ of the previous item does not exceed $\tfrac{1}{3}$ \SV{maybe superfluous?}. Then, 
For any $B\subset H_0$ with $|B|\geq N/2$,
we have
\[
\IP[\xi_{KN}^B \text{ is $(1-h)$-good}]
\geq 1 - 2C'N^2 \exp\big((1-h)N\ln 3-\tfrac{hN}{2}
\ln\tfrac{1}{1-\theta}\big).
\]
%SP{Here, I think we can just say that $B$ is $(1-h)$-good. But I don't remember why do we have to assume that $h\leq 1/3$... }
\end{itemize}
\end{prop}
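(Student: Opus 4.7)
The overall strategy is to combine classical techniques for supercritical oriented percolation (\`a la Durrett~\cite{D84}) with a Peierls-type estimate on the strip; throughout I treat $1-\theta$ as a small parameter.

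For part~(i), I would use a forward/backward coupling. Let $\xi_n^x$ denote the forward cluster from $x$ at level $n$, and let $\bar\xi_n^y$ be the reverse cluster from $y$, i.e., the set of sites at level $n$ from which $y$ is reachable along an open oriented path. By time-reversal symmetry of oriented bond percolation on the strip, $\bar\xi$ has the same law as a forward oriented percolation started at $y$ and run downward. The event $\{x\to y\}$ is equivalent to $\xi_n^x\cap\bar\xi_n^y\neq\emptyset$ for any intermediate level $n$, and I would choose $n\approx KN/2\geq 3N/2$. Using Durrett-style control of the right- and left-edge processes together with a branching-process comparison yielding extinction probability $O((1-\theta)^2)$, I would show that the forward cluster survives and reaches linear width in time $\Theta(N)$ with probability at least $1-C(1-\theta)$; iterating with stationary-regime estimates on the strip gives $|\xi_{KN/2}^x|/|H_{KN/2}|\geq 1-C(1-\theta)$, and symmetrically for $\bar\xi_{KN/2}^y$. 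Non-emptiness of the intersection then follows by a union bound.

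For part~(ii), the right-hand side splits into two pieces that I would control separately. The first term $C\exp(-\tfrac12\ln\tfrac{1}{9(1-\theta)})=3C\sqrt{1-\theta}$ bounds the probability that $\xi_n^x$ fails to span the strip at some intermediate level (around $KN/2$); the square-root exponent is the cost of a minimal one-sided ``blocking event'' constraining the edge-process, and can be extracted from the same edge estimates used in~(i) together with a single-step Cauchy--Schwarz / branching comparison. On the complementary spanning event, I would pass to the density estimate matching the second term. The factor $C'N^2\exp\!\bigl((1-h)N\ln 3-\tfrac{hN}{2}\ln\tfrac{1}{1-\theta}\bigr)=C'N^2\cdot 3^{(1-h)N}(1-\theta)^{hN/2}$ arises from a Peierls-type enumeration: on the spanning event, a site $z\in H_{KN}$ with $z\notin\xi_{KN}^x$ must be separated from $\xi^x$ by a dual path of closed bonds, and having at least $hN$ such sites at level $KN$ forces a dual configuration of total size at least $hN/2$, of probability at most $(1-\theta)^{hN/2}$. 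The number of such dual configurations is bounded by $3^{(1-h)N}$ (each oriented dual step has at most three successors), with an $N^2$ prefactor absorbing the choice of intermediate time and the base of the cut.

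For part~(iii), since $|B|\geq N/2$, the probability that the process started from $B$ fails to span the strip within time $O(N)$ is at most $(1-\theta)^{cN}$ (all $\geq N/2$ starting sites would have to die within a few steps, and these events can be decoupled into independent pieces via the $8/9$ hypothesis), which is dominated by the density term. Thus the extinction/spanning term from~(ii) disappears, and only the density estimate survives, doubled to account for possible failure on either side of the strip and yielding the factor $2C'N^2$. The main obstacle I anticipate is in~(ii): making the dual-path enumeration rigorous on a strip with boundary effects, and precisely identifying the minimal blocking event behind the $\sqrt{1-\theta}$ factor. The entropy $3^{(1-h)N}$ is the combinatorial cost of counting oriented self-avoiding dual paths, and the key combinatorial step is to reduce the joint isolation of $hN$ bad sites to such an enumeration with the stated minimum cost.
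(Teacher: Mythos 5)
The paper proves all three parts with a single uniform Peierls/contour count on the strip: a failure event forces the existence of a dual separating contour of closed edges; a contour of length $k$ has at most $3^k$ shapes, at least $g_N(k)$ of its steps are ``paid'' (probability $1-\theta$ each), and the starting positions number $O(k)$ or $O(N)$; summing the expected number of contours gives all three bounds. Your~(ii) reaches for essentially the same dual-path enumeration, but~(i) and~(iii) take routes that either do not close or are much heavier than needed.

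Two concrete gaps. In~(i), the branching-process comparison points the wrong way: the $\mathrm{Bin}(2,\theta)$ Galton--Watson process stochastically \emph{dominates} $|\xi_n^x|$, so it yields a \emph{lower} bound on the extinction probability of the oriented-percolation cluster, not the upper bound you need; and the claimed $O((1-\theta)^2)$ rate is in any case false at the boundary of the strip -- from a corner site such as $(0,0)$ there is only one outgoing oriented edge, so the cluster dies immediately with probability $1-\theta$. This is exactly why the statement reads $1-C(1-\theta)$ and why the paper's shortest contour has length one; the forward/backward coupling and edge-process machinery is unnecessary overhead, since the contour count gives~(i) in two lines. In~(ii), you correctly flag the obstacle (well-separated bad sites on $H_{KN}$ enclosed by many small disjoint contours) but do not resolve it. The paper's resolution is a specific device you would need: the separating contour is \emph{modified} to allow free horizontal steps along the top or bottom boundary through sites outside the target set, so that a single contour can enclose an arbitrary union of bad sites, with only the remaining $g_N(k)=\tfrac{hN}{2}+\tfrac12\big(k-(1-h)N\big)^+$ steps paid; without this, the union bound over small disjoint contours does not close. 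The factors $3^{(1-h)N}$ and $N^2$ then arise from summing $3^k(1-\theta)^{g_N(k)}$ over $k$ and over $O(N)$ starting positions, not from enumerating dual configurations of a fixed size or from a choice of ``intermediate time''. Finally, in~(iii), the paper simply observes that $|B|\ge N/2$ rules out short contours, which kills the first term; your decoupling claim ``via the $8/9$ hypothesis'' is not where independence comes from -- $\theta>8/9$ is used only so that $3\sqrt{1-\theta}<1$ and the geometric sums in the contour count converge.
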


\begin{remark}
While the results of Proposition~\ref{p_OP_strip} apply only for $\theta>8/9$, we believe that they can be extended to all $\theta>\theta_{cr}$ where $\theta_{cr}$ is the critical percolation threshold for two-dimensional oriented bond percolation. This would give ``the survival of zeros'' in the Bak-Sneppen model on a circle graph of size $N\gg 1$ (one-dimensional tori) for $p<0.0015$, since in this case $\Theta(14,0.0015)=0.73\dots >0.726\dots$ (see~\eqref{proba_4-block_good_asymp}) which is a proven lower bound for $\theta_{cr}$. 
\end{remark}

\begin{proof}[Proof of Proposition~\ref{p_OP_strip}]
 All these statements essentially follow from the usual ``contour arguments'' (but, still, some care has to be taken).
\begin{figure}
\begin{center}
\includegraphics[scale=1.0]{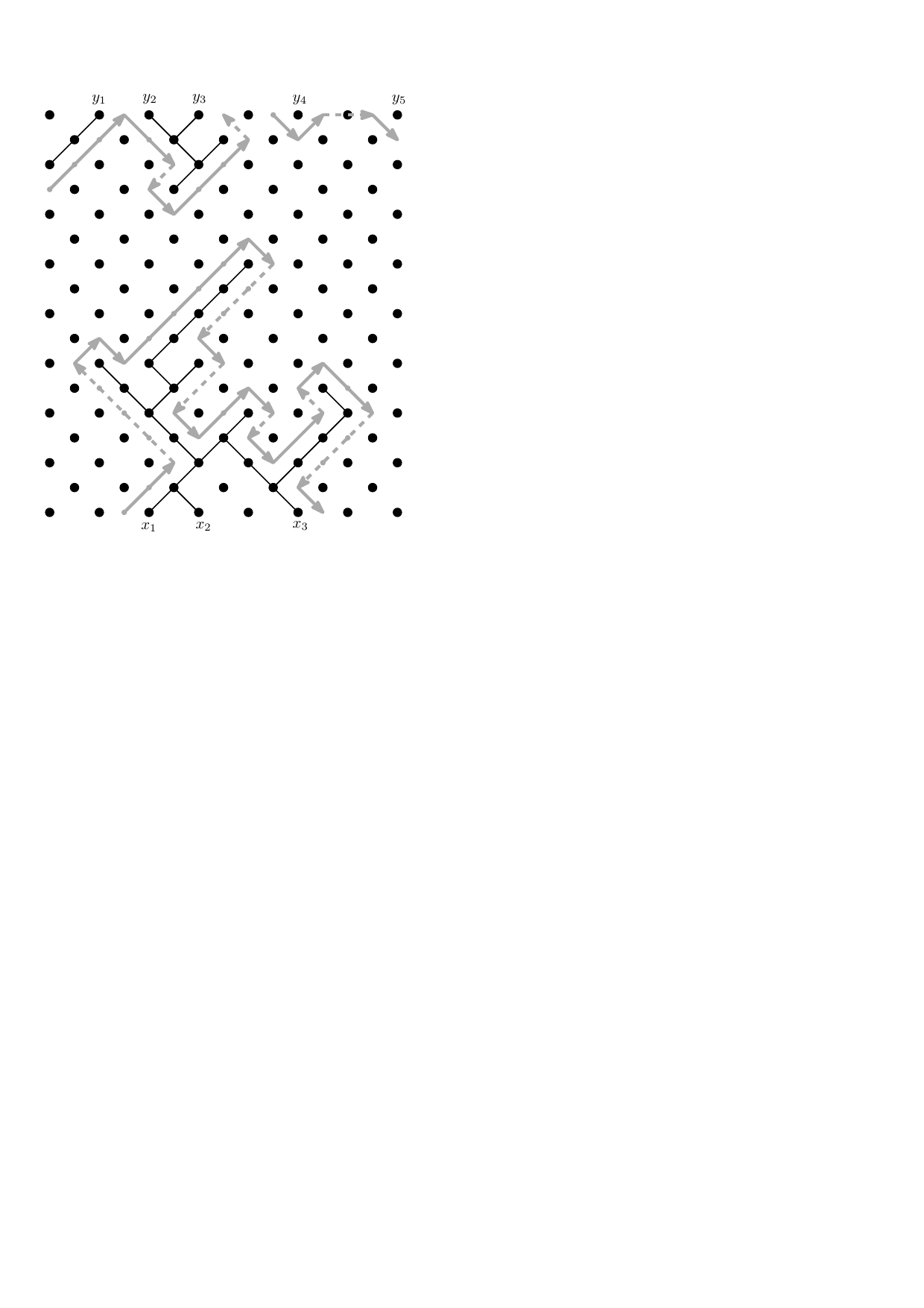}
\caption{Oriented percolation on a strip in the upwards direction, and separating 
contours. Solid parts of the contours are ``paid'', and dashed parts are ``free''.}
\label{f_contours}
\end{center}
\end{figure}
The basic idea is depicted in Figure~\ref{f_contours}: the grey contours prevent the cluster
of $\{x_1,x_2,x_3\}$ from reaching the upper part or prevent $\{y_1,y_2,y_3\}$ from
being reached from below (for now, ignore what is happening around $\{y_4,y_5\}$ in that picture). These contours start and end on the boundary, and the first coordinate of the starting point has to be strictly less than the first coordinate of the endpoint. In these contours, the dashed arrows (i.e., those that decrease the first coordinate) are ``free'', but the solid arrows (i.e., those that increase the first coordinate) have a cost of~$(1-\theta)$ elevated to the number of transversal arrows it cuts. See e.g.\ the discussion in Section~10 of~\cite{D84}. In what follows, we refer to the number of steps between integer sites whose coordinate sums are odd as to the \emph{length} of the contour.

Then, it becomes a standard contour-counting argument: the probability that there exists a ``separating'' closed contour is trivially bounded above by the expected number of such contours. When dealing with contours of a fixed size, say $k$, first, we need to figure out where that contour may start, to be able to cut out what has to be cut out, and note that on each step the path of the contour may take {\em at most three} possible directions. Each step that increases the first coordinate incurs a cost, and the contour must, overall, make progress in that direction. Therefore, the expected number of such contours is at most $3^k (1-\theta)^{k/2}$ times the number of possible starting points. This argument carries through in a straightforward manner in the case of~(i), so we omit the details.

When addressing parts (ii) and (iii), an additional difficulty arises: if $B\in H_0$ (or the ``target set'' is on the  level~$KN$) contains sites that are well-separated, then these sites may be enclosed by a collection of small disjoint contours, making them cumbersome to handle directly. To address this, we now modify the notion of a separating contour to include configurations such as the one shown in the top right part of Figure~\ref{f_contours} (which cuts out the set $\{y_4,y_5\}$ from the rest of the picture): according to the modified definition, the separating contour is allowed to take a ``free'' horizontal step (at the top or bottom boundary) through the points which are outside of the target set, precisely as shown in the picture.

Now, let us prove~(ii). We need to find an upper bound on the expected number of contours that separate $x\in H_0$ from a subset of~$H_{KN}$ of cardinality at least~$hN$. First, if the contour is of size~$1$ (cutting away the site at the ``corner'' from the rest), then it has cost~$(1-\theta)$. Secondly, if the length~$k\geq 2$ of such a contour is less than~$hN$, the contour has to be in the lower part of the picture, cutting out~$x$ from the rest. There are at most~$k$ candidates for the initial point of the contour, so the expected number of cutting contours of length less than~$hN$ is bounded above by (recall that we assumed that $\theta\geq \frac{8}{9}$,
so that $3(1-\theta)^{1/2}<1$)
\[
 (1-\theta)+\sum_{k=2}^{hN} k 3^k(1-\theta)^{k/2}   \leq C(1-\theta).
\]
Next, we have to deal with longer contours, those of length at least~$hN$.  This contour can be a ``special one'' (with strictly horizontal ``free'' edges, as the one in the top right part of Figure~\ref{f_contours}), so we are no longer able to say that at least half of its steps must have a cost. However, the number of such horizontal ``free'' edges can be at most $(1-h)N$. 
Therefore, such a contour has to contain at least $g_N(k):=\tfrac{hN}{2}+\tfrac{1}{2}(k-(1-h)N)^+$ ``paid'' edges. Also, the number of possible starting points is now $O(N)$. We then obtain that the expected number of cutting contours of length at least~$hN$ is bounded above by
\begin{align*}
  \sum_{k\geq hN}^{(1-h)N} c N 3^k (1-\theta)^{g_N(k)}
  & =  \sum_{k=hN}^{(1-h)N} 
  c N 3^k (1-\theta)^{\tfrac{hN}{2}}
  + \sum_{k>hN} 
  c N 3^k 
(1-\theta)^{\tfrac{k}{2}-\tfrac{k}{2}(1-2h)N}\\
& \leq c' N^2 \exp\big((1-h)N\ln 3-\tfrac{hN}{2}
\ln\tfrac{1}{1-\theta}\big).
\end{align*}
Gathering the pieces, we obtain the statement in~(ii). The proof of~(iii) is analogous.
\end{proof}

\section{Proofs of the main results}
\label{s_proofs_main}
\begin{proof}[Proof of Theorems~\ref{t_small_p_1_in_x} and~\ref{t_small_p_proportion}.]
In both cases, the idea is to observe that, as discussed in Sections~\ref{s_2blocks}--\ref{s_4blocks}, for small~$p$ and on a given chain, the model dominates an oriented percolation process; then, the results of Section~\ref{s_OP_strip} can be applied, also with the help of Lemma~\ref{l_est_pi}.
\begin{figure}
\begin{center}
\includegraphics{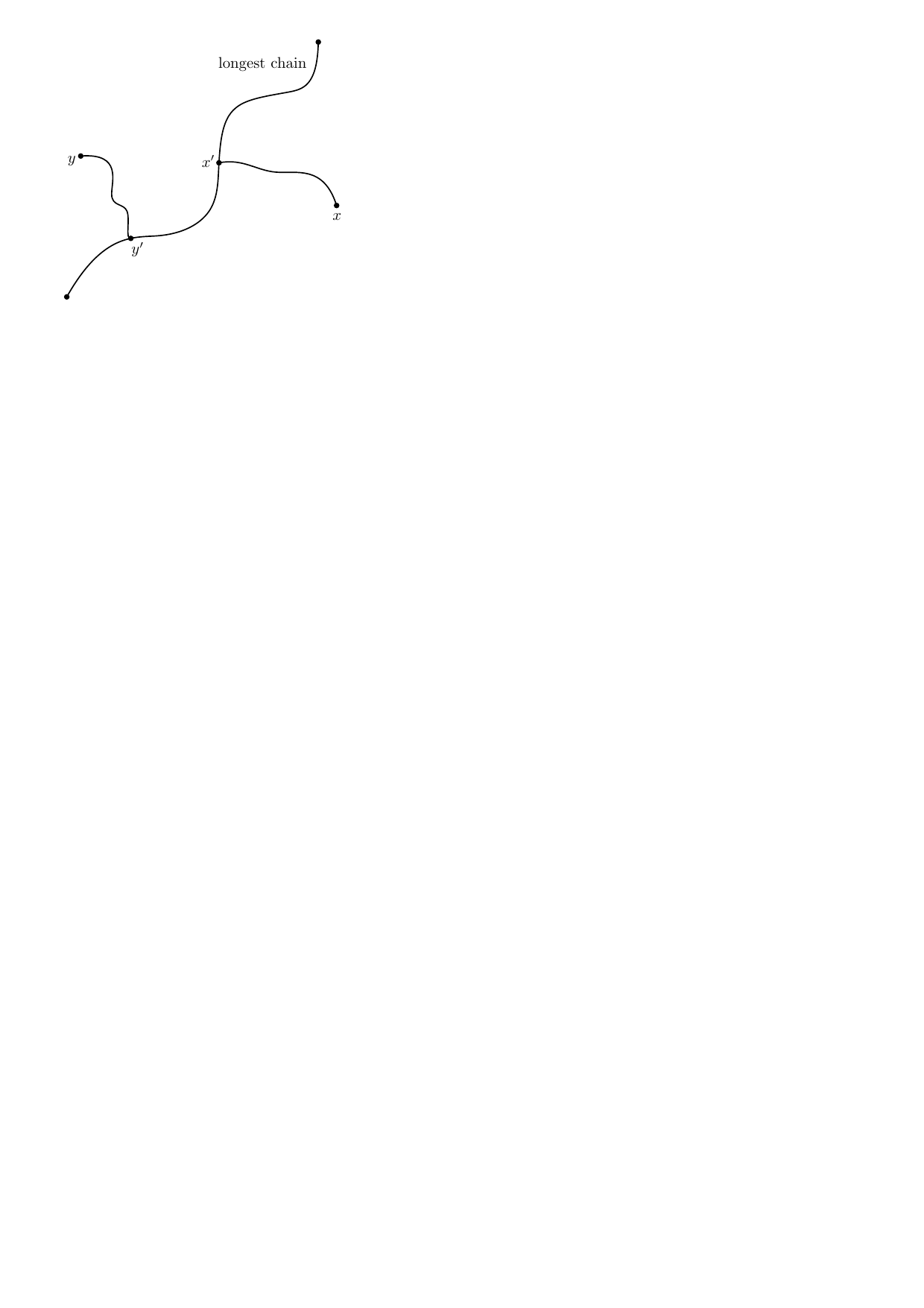}
\caption{On the proof of Theorem~\ref{t_small_p_1_in_x}:
propagating a zero to~$x$.}
\label{f_chains}
\end{center}
\end{figure}

In the case of Theorem~\ref{t_small_p_1_in_x},  we apply Lemma~\ref{l_est_pi} with $A=\{\eta:\eta_x=1\}$ and~$t=9\hL(p,d)\ell(G)+1$.
To do that, we need to prove that
\begin{itemize}
 \item[(i)] if $\eta\notin A$ (i.e., $\eta_x=0$) then with probability close to~$1$  after time $9\hL(p,d)\ell(G)+1$ we will still have $\eta\notin A$;
 \item[(ii)] if $\eta\in A$ (i.e., $\eta_x=1$) then with at least a constant probability after time $9\hL(p,d)\ell(G)+1$ we will have $\eta\notin A$ (i.e., $\eta_x=0$).
\end{itemize}
Both of these facts are consequences of Proposition~\ref{p_OP_strip}~(I). Indeed, first, let~$x'$ be the closest to~$x$ site on the longest chain (see Figure~\ref{f_chains}), and let $d_1\leq \ell(G)$ be the graph distance from~$x$ to~$x'$; let us write $d_1=\gamma \ell(G)$ for some $\gamma\leq 1$.  Then, in time~$3\gamma \hL(p,d)\ell(G)$ we will have a~$0$ at~$x'$ with high probability by Proposition~\ref{p_OP_strip}~(i); we will also have a~$0$ at~$x'$ with high probability at time $(9-3\gamma)\hL(p,d)\ell(G)+1$ (again by Proposition~\ref{p_OP_strip}~(i), but this time applied to the main chain). Then, we can send a~$0$ back to~$x$ in time~$3\gamma\hL(p,d)\ell(G)$.

To show (ii), note that in the ``worst case'' (ones are everywhere) we will still have that, first, with a uniformly positive probability in time~$1$ some of these ones will change to zeros, say, at site~$y$. Then (see Figure~\ref{f_chains}) a similar argument will work. Indeed, let~$y'$ be the closest site to~$y$ on the main chain and let $\gamma'\ell(G)$ (where $\gamma'\le 1$) be the graph distance between~$y$ and~$y'$.  In time~$3\gamma'\hL(p,d)\ell(G)$, zeros will propagate to the longest chain (i.e., to~$y'$). Then, after waiting for $(9-3\gamma'-3\gamma)\hL(p,d)\ell(G)$ units of time, with high probability we will have a zero at~$x'$
 at time $1+(9-3\gamma)\hL(p,d)\ell(G)$,  and then we will have a zero at site~$x$ itself  at time $1+9\hL(p,d)\ell(G)$.

The proof of Theorem~\ref{t_small_p_proportion} proceeds analogously: we apply Lemma~\ref{l_est_pi} with Proposition~\ref{p_OP_strip}~(ii)--(iii) on each of those ``large chains'', and then use the union bound.
\end{proof}

\vspace{5mm}
For the proof of Theorem~\ref{t_extinct_zeros} we need the following technical result. While it is probably known, for the sake of completeness, we present its proof as well.
\begin{lem}
\label{elemlemma}
Let $X_n$ be an ergodic Markov chain on a finite state space $\mathcal{S}$ with the stationary distribution $\pi$; assume that $0$ is one of the states. Suppose there is a non-negative function $f:\mathcal{S}\to\R$ such that 
\begin{align}
\begin{split}
f(0)&=0;\\    
\max_{x\in\mathcal{S}: \ p_{0x}>0} f(x)&=c_0;\\
|f(X_{n+1})-f(X_n)|&\le C;\\
\IE(f(X_{n+1})-f(X_n)\mid X_n=x)&<-\eps,\quad x\ne 0,
\end{split}
\end{align}
for some $\eps>0$, $c_0>0$, and $C>0$.
Then for some $c_1,c_2>0$ depending on $c_0,C,\eps$ only,
\begin{align}
\pi(A_k)\le c_1 e^{-c_2 k},
\quad\text{where }A_k=\{x\in\mathcal{S}:\ f(x)>k\},\ k>0.  
\end{align}
\end{lem}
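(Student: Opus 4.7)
The plan is to introduce an exponential Lyapunov function $g(x) = e^{\lambda f(x)}$ for a small $\lambda > 0$ chosen below, establish that $g$ contracts in expectation away from state $0$, and then use the stationary balance together with Markov's inequality. This is a fairly standard Foster--Lyapunov argument for geometric tails under bounded increments and uniform negative drift; the only nonstandard ingredient is the handling of the ``source'' at state $0$, which is controlled by the $c_0$ hypothesis.

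First, I would verify the one-step multiplicative contraction away from $0$. Writing $\Delta_n = f(X_{n+1}) - f(X_n)$, the Taylor estimate $e^{\lambda y} \le 1 + \lambda y + \tfrac{1}{2}\lambda^2 y^2 e^{\lambda|y|}$ on $|y| \le C$, combined with the assumptions $\IE(\Delta_n \mid X_n = x) < -\eps$ and $|\Delta_n| \le C$, gives
\[
\IE\bigl(e^{\lambda \Delta_n} \bigm| X_n = x\bigr) \le 1 - \lambda \eps + \tfrac{1}{2}\lambda^2 C^2 e^{\lambda C} \le e^{-\lambda \eps/2}
\]
for every $x \ne 0$, provided $\lambda = \lambda(\eps, C) > 0$ is chosen small enough; fix such a $\lambda$. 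Consequently,
\[
\IE\bigl(g(X_{n+1}) \bigm| X_n = x\bigr) \le e^{-\lambda \eps/2}\, g(x) \qquad \text{for all } x \ne 0.
\]

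Next, I would invoke the stationarity identity $\IE_\pi g(X_1) = \IE_\pi g(X_0) =: M$. Splitting according to whether $X_0 = 0$ or not, and using the second hypothesis to get $g(X_1) \le e^{\lambda c_0}$ on $\{X_0 = 0\}$, I obtain
\[
M \le \pi(0)\, e^{\lambda c_0} + e^{-\lambda \eps/2}\bigl(M - \pi(0) g(0)\bigr) = \pi(0)\, e^{\lambda c_0} + e^{-\lambda \eps/2}\bigl(M - \pi(0)\bigr).
\]
Solving for $M$ and using $\pi(0) \le 1$ yields $M \le K$ for some constant $K = K(c_0, C, \eps)$, and Markov's inequality then finishes the job:
\[
\pi(A_k) = \pi(f > k) = \pi\bigl(g > e^{\lambda k}\bigr) \le e^{-\lambda k}\, M \le K\, e^{-\lambda k},
\]
so one takes $c_1 = K$ and $c_2 = \lambda$.

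I do not expect a real obstacle here; the one thing to watch is that the bounded-increment assumption $|\Delta_n|\le C$ is genuinely needed in order to dominate the second-order Taylor term by the first-order negative drift, and hence to pick $\lambda$ independent of the state. Balancing the ``source'' coming out of state $0$ (contributing $\pi(0)e^{\lambda c_0}$) against the contraction elsewhere through the stationary equation is precisely what forces $c_0$ to enter the prefactor $c_1$, while leaving the exponential rate $c_2 = \lambda$ depending only on $\eps$ and $C$.
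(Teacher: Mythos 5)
Your proof is correct, and it takes a genuinely different and arguably cleaner route than the paper's. Both arguments rest on the same core observation that $g(x)=e^{\lambda f(x)}$ contracts in expectation away from state $0$ (the paper derives the supermartingale property for $e^{c_1 f(X_n)}$ by citing Fayolle--Malyshev--Menshikov; you prove it directly by a second-order Taylor bound, which is a small gain in self-containment). Where the two proofs really diverge is in how they convert that drift into a bound on $\pi(A_k)$. The paper goes through the cycle (Kac) formula $\pi(x)=\pi(0)\,\IE_0\sum_{n=0}^{T_0-1}\1{X_n=x}$, then uses the optional stopping theorem twice -- once to bound the probability that an excursion from $0$ reaches level $b$, and once to bound the number of returns to a band $B_i=[k+ih,k+(i+1)h)$ before $0$ is hit -- and finally sums over bands. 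You instead plug $g$ directly into the stationarity identity $\IE_\pi g(X_1)=\IE_\pi g(X_0)$, isolate the single ``source'' term at state $0$ (controlled by $c_0$), solve the resulting linear inequality for $M=\IE_\pi g$, and finish with Markov's inequality. Your version avoids the cycle decomposition, the stopping-time bookkeeping, and the band-counting entirely, and the constants come out transparently ($c_2=\lambda$ from the drift, $c_1$ from the source/contraction balance). The one thing your write-up leaves implicit that is worth stating is $g(0)=e^{\lambda f(0)}=1$, which is what lets the $\pi(0)$ terms combine; but that is immediate from $f(0)=0$, so there is no gap.
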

\begin{proof}
By the cycle formula (see e.g.\ Proposition~1.14 in~\cite{LevinPeres}) 
\[
\pi(x)=\pi(0) \, \IE_0\sum_{n=0}^{T_0-1}1_{X_n=x},
\]
where $\IE_0$ is the expectation under $X_0=0$ and $T_0=\inf\{n\ge 1:\ X_n=0\}$. Hence
\[
\pi(A_k)=\pi(0) \,\IE_0\sum_{n=0}^{T_0-1}1_{X_n\in A_k}.
\]
Let  $b\ge k$  and define the stopping time 
\[
T_{0,b}=\inf\{n>0:\ X_n=0\text{ or }X_n\in A_b\}=\inf\{n>0:\ X_n=0\text{ or }X_n> b\}.
\]
Under the assumption of the Lemma, see Theorem 2.17 in~\cite{FMM} or Lemma~4 in~\cite{VOLK}, it holds that $\IE(e^{c_1 f(X_n)}\mid X_n=x)\le  e^{c_1 f(x)}$ for $x\ne 0$ for some  $c_1>0$. Then $e^{hf(X_{n\wedge T})}$ is a supermartingale and by the optional stopping theorem 
\[
\IP_x\left(X_{T_{0,b}}=0\right)+\IP_x\left(X_{T_{0,b}}\in A_b\right) e^{c_1 b} \le \IE_x e^{c_1 f(X_{T_{0,b}})}\le
 \IE_{x} e^{c_1 f(X_0)}= e^{c_1 f(x)}\le e^{c_0 c_1}
\]
for every $x$ such that $p_{0x}>0$. At the same time,
\[
\IP_0 (X_{T_{0,b}}\in A_b)= \sum_{x\in\mathcal{S} }
p_{0x}\IP_x(X_{T_{0,b}}\in A_b)\le \max_{x\in\mathcal{S}:\ p_{0x}>0} \IP_x(X_{T_{0,b}}\in A_b)
\]
(note $\IP_x(X_{T_{0,b}}\in A_b)=0$ if $x=0$) yielding
\[
\IP_0(X_{T_{0,b}}\in A_b)\le  e^{c_0 c_1}\ e^{-c_1 b} 
\]
Next, note that the conditions of the lemma imply\footnote{Let $Z=f(X_{n+1})-f(X_n)$. By Markov inequality, $\IP(Z\le -\eps/2)=1-\IP(C+Z>C-\eps/2)\ge 1-\frac{\IE(C+Z)}{C-\eps/2}\ge 1-\frac{C-\eps}{C-\eps/2}=\frac{\eps/2}{C-\eps/2}$ since $\IE Z\le -\eps$ and $Z+C\ge 0$.}
\[
\IP(f(X_{n+1})-f(X_n)\leq -\eps/2\mid X_n\ne 0)\geq \frac{\eps}{2C-\eps}=:\delta_1>0.
\]
Let $h=\eps/4$ and $B_i=[k+ih,k+(i+1)h)$, $i=0,1,2,\dots$. Every time $f(X_n)\in B_i$, with probability at least $\delta$ we have $f(X_{n+1})\le k+ih-h$. Then, with probability at least $\delta_2>0$ (independent of $k,i$) the process~$X_n$ will reach~$0$ before $f(X_n)$ ever reaching level $k+ih$. Indeed, since $e^{c_1f(X_n)}$ is a supermartingale, and denoting the stopping time~$\tau=T_{0,k+ih}$ to be the moment when the walk either hits $[k+ih,\infty)$ or~$0$, by the optional stopping theorem we obtain
\[
(1-\IP(f(X_\tau)\ge k))+ \IP(f(X_\tau)\ge k))e^{c_1(k+ih)}\le \IE(e^{c_1f(X_\tau)}\le e^{c_1f(X_0)}\le e^{c_1(k+(i-1)h)} 
\]
when $f(X_0)\le k+ih-h$; whence
\[
\IP(f(X_\tau)\ge k)\le \frac{e^{c_1(k+(i-1)h)}-1}{e^{c_1(k+ih)}-1}\le e^{-c_1 h}
=:\delta_2.
\]
Hence, the number of such returns is uniformly bounded by a geometric random variable with finite expectation $(\delta_1\delta_2)^{-1}$. As a result,
\begin{align*}
\IE_0 \sum_{n=0}^{T_0-1}1_{X_n\in B_i}
&\le \IE_0\Big(\sum_{n=0}^{T_0-1}1_{X_n\in B_i}\mid \text{reach $A_{k+ih}$ before $0$}\Big)\IP(\text{reach $A_{k+ih}$ before $0$})    
\\ &\le (\delta_1\delta_2)^{-1}\times e^{c_0c_1} e^{-c_1(k+ih)} .
\end{align*}
Therefore, we can write 
%\SP{(maybe make this display at least two lines...)}
\begin{align*}
\pi(A_k)& \le \IE_0\sum_{n=0}^{T_0-1}1_{X_n\in A_k}
= \IE_0\sum_{i=0}^\infty\sum_{n=0}^{T_0-1}1_{X_n\in B_i}
\\ & \le \sum_{i=0}^\infty (\delta_1\delta_2)^{-1} e^{c_0c_1} e^{-c_1(k+ih)}
=\frac{(\delta_1\delta_2)^{-1} e^{c_0c_1} }{1- e^{-c_1h}}\, e^{-c_1\,k},
\end{align*}
thus concluding the proof of Lemma~\ref{elemlemma}.
\end{proof}

%%
%% Perhaps remove \vspace later...
%%
%\vspace{4cm}
Now, we are ready to prove the ``extinction of zeros''
result.
\begin{proof}[Proof of Theorem~\ref{t_extinct_zeros}.]
The proof will proceed in two parts. First, we present a relatively easy argument showing that there exists $q_0>\frac1{d+1}$ such that the statement of the Theorem holds for all $q<q_0$. Second, we provide a more refined argument for constant degree graphs, yielding an improved estimate for~$q_0$.

Let us think of zeros as ``particles''. A {\it type~$1$} particle is a particle which is ``alone'' (has no neighbours), and a {\it type~$2$} particle is a particle which has at least one neighbour. Let (with discrete time) $N^{(j)}_k$ be the number of particles of type~$j$ at time~$k$, and denote $\zeta_k=(N^{(1)}_k,N^{(2)}_k)$. Finally, let $N_k=N^{(1)}_k+N^{(2)}_k$ be the total number of zeros. We will now prove that there exists $q_0>\frac{1}{d+1}$ such that for $q \in[\frac{1}{d+1},q_0)$ and $(n_1,n_2)\neq (0,0)$ we have
\begin{equation}
\label{eq_Lyapunov}
\Delta(n_1,n_2):=\IE\big(f(\zeta_{k+1})-f(\zeta_k)\mid \zeta_k=(n_1,n_2)\big)<-\eps
\end{equation}
for some $\eps=\eps(q)>0$, where 
\[
f(n_1,n_2) = n_1 + (1-h) n_2=(n_1+n_2)-h n_2
\]
with some $h=h(q)\in(0,1)$ to be chosen later. Intuitively, in this function, type~$2$ particles have a smaller ``weight'' than those of type~$1$. 
Once we establish~\eqref{eq_Lyapunov}, this will imply what we want by Lemma~\ref{elemlemma}.

From now on, denote by~$v$ the site where the chosen particle is located.
The change in $N_k$ can be the result of replacing a single zero (a {\em type $1$} particle) or a non-single zero ({\it type $2$}). Hence
\begin{equation}
\label{N_drift}    
\IE\big(N_{k+1}-N_k\mid \zeta_k=(n_1,n_2)\big)\le (d+1)q-1 - \frac{n_2}{n_1+n_2}
\end{equation}
since when a zero particle at site $v$ is replaced, a new zero is placed at $v$ and each of its (up to $d$) neighboring sites with probability $q$. This replacement mechanism ensures that at least one zero is removed (specifically at $v$), and potentially more zeros are eliminated (whenever a type~2 particle is replaced).

First, observe that if we replace a type~1 particle, we can only possibly increase the number of particles of type 2;
\begin{align}\label{boundN21}
\IE\big(N^{(2)}_{k+1}-N^{(2)}_k\mid \zeta_k=(n_1,n_2),\text{$v$ is type }1\big)\ge 2q^2   
\end{align}
(the probability that both $v$ and its neighbour are zeros).

Now, suppose that we replace a type~2 particle. Let us get the lower bound for the change in the quantity of type~2 particles. When we replace $v$ and all its neighbours by one, this can affect $v$, its neighbours, and the neighbours of these neighbours. Hence
\begin{align}\label{boundN22}
\IE\big(N^{(2)}_{k+1}-N^{(2)}_k\mid \zeta_k=(n_1,n_2),\text{$v$ is type }2\big)\ge -(1+d+d(d-1))
 = -(1+d^2).   
\end{align}
Combining, we get
\begin{align*}
\IE\big(N^{(2)}_{k+1}-N^{(2)}_k\mid \zeta_k=(n_1,n_2)\big)&\ge 2q^2\frac{n_1}{n_1+n_2}-(1+d^2)\frac{n_2}{n_1+n_2}
\\ & =2q^2-(1+d^2+2q^2)\frac{n_2}{n_1+n_2},
\end{align*}
so
\begin{align*}
\Delta(n_1,n_2) &=
\IE\big(N_{k+1}-N_k\mid \zeta_k=(n_1,n_2)\big)
-h\, \IE\big(N^{(2)}_{k+1}-N^{(2)}_k\mid \zeta_k=(n_1,n_2)\big)\\
&\le (d+1)q-1 - \frac{n_2}{n_1+n_2} -h\Big( 2q^2-(1+d^2+2q^2)\frac{n_2}{n_1+n_2}\Big)
\\ &=
(d+1)q-1 -2 h q^2 - \frac{n_2}{n_1+n_2}\big(1- h(1+d^2+2q^2)\big).
\end{align*}
By setting $h=\frac1{d^2+3}$, we ensure that the expression in the square brackets is non-negative and thus $\Delta\le (d+1)q-1 -2 h q^2<0$ as long as 
\[
q<q_0:=\frac1{d+1-4h(d+1+\sqrt{(d+1)^2  - 8h })^{-1}} 
\]
where trivially $q_0>\frac1{d+1}$.

\vspace{1cm}

We now proceed with a sharper estimate of the change in~\eqref{eq_Lyapunov} for the case of constant degree graphs. Recall that $v$ denotes the particle to be updated at time~$k$. We introduce the following notations for an update that occurred at time~$k$, noting that the entire neighbourhood of $v$ is updated.
\begin{itemize}
\item Let $M^{(k)}$ be the number of particles that were neighbours to  $v$. 
\item Let $W^{(k)}=1$ ($W^{(k)}=1-h$ respectively) if $v$ was of type~$1$ (type~$2$ respectively), that is, $W^{(k)}$ is the weight of the updating particle in our Lyapunov function. Note that $W^{(k)}=1$ if and only if $M^{(k)}=0$.
\item For $m=1,2$, let~$X^{(k)}_m$ be the number of \emph{new} type $m$ particles created in the neighbourhood (including $v$ itself) of  $v$. 
\item We denote by~$Z^{(k)}$ the number of ``transitions'' of particles outside of the immediate neighbourhood of $v$ from type $2$ to type $1$ which were induced by the update (i.e., it is the number of the ``external'' particles which had a neighbour before the update, but became lonely after it); such transitions may only occur when $v$ is type $2$. The reverse transitions from type $1$ to type $2$ may occur as well; however, we do not care about those since such a transition decreases the ``weight'' of that particle, and we only need an {\em upper bound} in~\eqref{eq_Lyapunov}. 
\end{itemize}
Then
\begin{align}
\begin{split}
 f(\zeta_{k+1})-f(\zeta_k) &\leq X^{(k)}_1 + (1-h)X^{(k)}_2  + hZ^{(k)} - (1-h)M^{(k)} - W^{(k)}\\
 &=  X^{(k)}_1 + X^{(k)}_2 -hX^{(k)}_2 + hZ^{(k)}  - (1-h)M^{(k)} - W^{(k)}.
\end{split}\label{incr_f}
\end{align}

We denote by $\IE_{v,\eta}$ the expectation given that the current configuration is~$\eta$ and the update occurs at~$v$. Whether $v$ is type $1$ or $2$, the total (i.e., of both types) expected size of its progeny is $q(d+1)$:
\begin{equation}\label{upd1_total}
 \IE_{v,\eta} (X^{(k)}_1 + X^{(k)}_2) = q(d+1).
\end{equation}
Next, the expected size of type-$2$ progeny is at least $dq^2+q(1-(1-q)^{d})$ as the probability that a neighbour of $v$ will be of type~$2$ after the update is at least~$q^2$, and the probability that $v$ will be of type~$2$ after the update is exactly~$q(1-(1-q)^{d})$. Hence, when $v$ is  type $1$,
\begin{equation}\label{upd1_type2}
 \IE_{v,\eta} X^{(k)}_2 \geq dq^2+q(1-(1-q)^{d}).
\end{equation}
Also, when $v$ is type $1$, $M^{(k)}=0$ and, as noted above, $Z^{(k)}=0$. Hence, from~\eqref{incr_f}--\eqref{upd1_type2} we have
\begin{equation}\label{eq_Lyapunov_type1}
\IE\big(f(\zeta_{k+1})-f(\zeta_k)\mid \zeta_k, \text{ $v$ is type $1$}\big) 
 \leq q(d+1) - h(dq^2+q(1-(1-q)^d)) - 1.
\end{equation}

Next, assume that $v$ is type $2$,
%Assume that the configuration~$\eta$ is such that there is a particle of type $2$ at site~$x$, 
and exactly~$M^{(k)}=m(\ge 1)$ neighbours  of~$v$ have particles (see e.g.\ Figure~\ref{f_type2}).
\begin{figure}
\begin{center}
\includegraphics{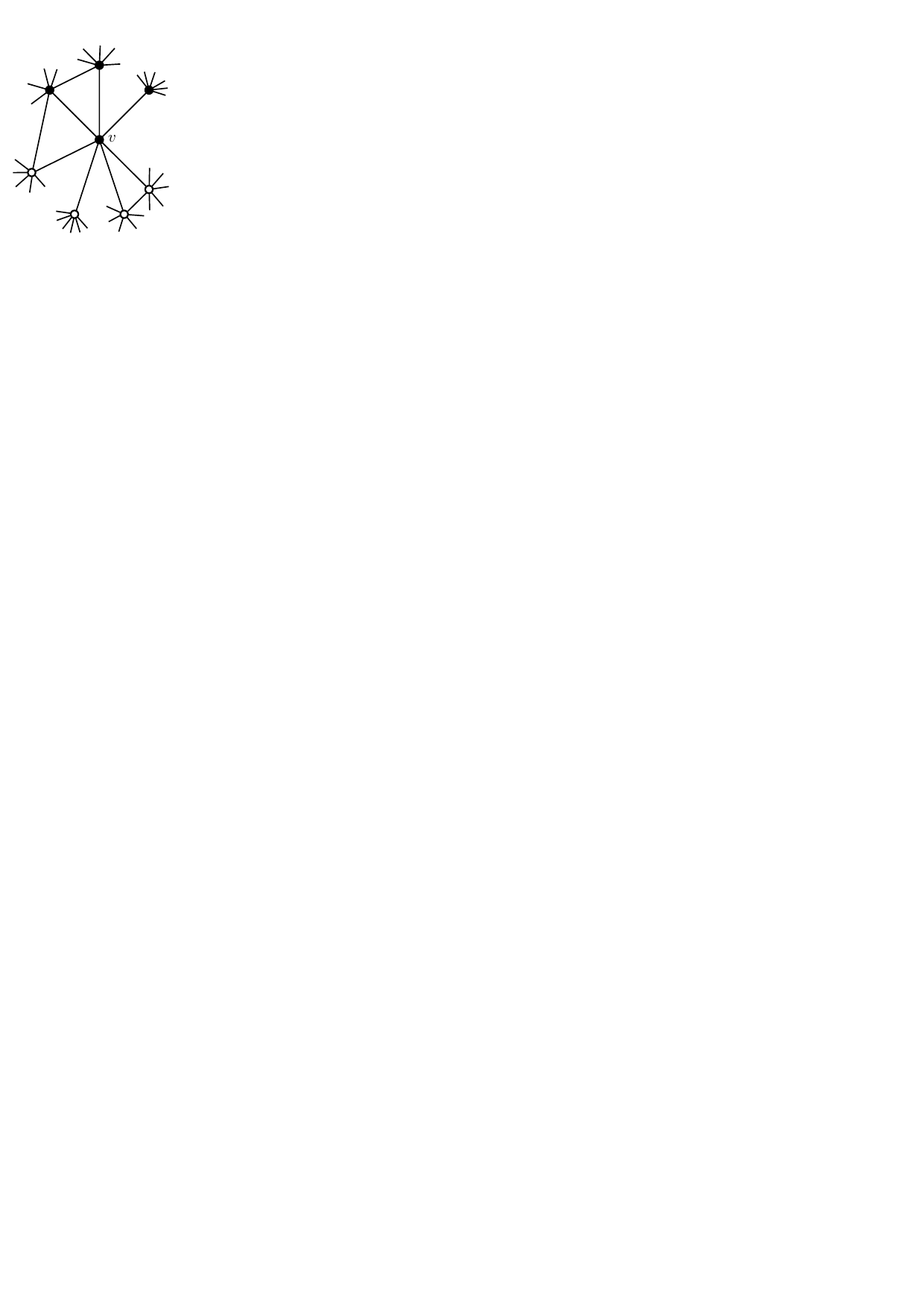}
\caption{An example: there is a type $2$ particle at site~$v$ of a graph of constant degree $d=7$, and $m=3$ neighbours of~$v$ contain
particles.}
\label{f_type2}
\end{center}
\end{figure}
Then
\begin{align}\label{upd2_transitions}
\IE_{v,\eta} Z^{(k)} &\leq m(1-q)(d-1)
\end{align}
(as type~$2$ to type $1$ transitions can occur only at the sites next to a neighbouring particle of~$v$). Assume also that
\begin{equation}
\label{cond_h_take_m=1}
h<\frac{1}{q+d-dq}.
\end{equation}
Then, \eqref{incr_f} implies (recall that $m\geq 1$)
%\SP{Inserted a lefteqn; otherwise, there was an ugly overfull.}
\begin{align}\label{eq_Lyapunov_type2}
\begin{split}
\lefteqn{
\IE\big(f(\zeta_{k+1})-f(\zeta_k)\mid \zeta_k, \text{ $v$ is type-$2$}\big) 
}
\\
&\leq q(d+1) -h \big(q^2d + q(1-(1-q)^d)\big)+hm(1-q)(d-1) -(1-h) (m+1) \\
&= q(d+1) -h \big(q^2d + q(1-(1-q)^d)\big) -m\big(1-h(d+q-dq)\big) -1 + h \\
\intertext{~\footnotesize{(using that $1-h-hq-h(d-1)>0$ by~\eqref{cond_h_take_m=1})}}
&\leq q(d+1) -h \big(q^2d + q(1-(1-q)^d)\big) -\big(1-h(d+q-dq)\big) -1 + h\\
&= q(d+1) - 2 +h\big(1+d(1-q-q^2)+q(1-q)^d\big).    
\end{split}
\end{align}
Now, $\IE\big(f(\zeta_{k+1})-f(\zeta_k)\mid \zeta_k=(n_1,n_2)\neq(0,0)\big)$ is bounded from above by a linear combination of the right-hand sides of~\eqref{eq_Lyapunov_type1} and~\eqref{eq_Lyapunov_type2}; therefore, we need to choose~$h\in (0,1)$ in such a way that both expressions are strictly negative. That is, we need that
\begin{align*}  
h >T_1(q,d) &:= \frac{q(d+1)-1}{dq^2+q(1-(1-q)^d)}\quad \text{and}
\\
h < T_2(q,d)&:=\frac{2-q(d+1)}{1+d(1-q-q^2)+q(1-q)^d},
\end{align*}
together with~\eqref{cond_h_take_m=1}.

Note that $T_1(\frac{1}{d+1},d)=0$, while $T_1(q,d)$ increases in~$q$ at least in the interval $[\frac{1}{d+1},\frac{2}{d+1}]$; on the other hand, $T_2(q,d)$ and the right-hand side of~\eqref{cond_h_take_m=1} are all positive at $q=\frac{1}{d+1}$.
Define 
\begin{equation}
\label{def_q0}
 q_0=q_0(d)
  = \sup\Big\{q>\tfrac{1}{d+1}: T_1(q,d) <
  \min\big(1,T_2(q,d),\tfrac{1}{q+d-qd}\big).\Big\}
\end{equation}
Then for each $q\in [\tfrac{1}{d+1},q_0)$ there exists~$h\in (0,1)$ such that~\eqref{eq_Lyapunov} holds. This concludes the proof of Theorem~\ref{t_extinct_zeros}.
\end{proof}

\section*{Acknowledgment}
S.V.\ research is partially supported by the Swedish Science Foundation grant VR 2019-04173.

\end{document}